\newcommand \reg{\operatorname{reg}}
\newtheorem{theorem}{Theorem}[section]
\newtheorem{definition}[theorem]{Definition}
\newtheorem{lemma}[theorem]{Lemma}
\newtheorem{proposition}[theorem]{Proposition}
\newtheorem{example}[theorem]{Example}
\newtheorem{remark}[theorem]{Remark}
\newtheorem{corollary}[theorem]{Corollary}
\begin{document}
\title{Regularity of binomial edge ideals of Cohen-Macaulay bipartite graphs}
\author{A V Jayanthan}
\email{jayanav@iitm.ac.in}

\author{Arvind Kumar}
\email{arvkumar11@gmail.com}
\address{Department of Mathematics, Indian Institute of Technology
Madras, Chennai, INDIA - 60036}

\maketitle

\begin{abstract}
Let $G$ be a finite simple graph on $n$ vertices and $J_G$ denote the
corresponding binomial edge ideal in $S = K[x_1, \ldots, x_n, y_1,
\ldots, y_n].$ We compute the Castelnuovo-Mumford regularity of $S/J_G$
when $J_G$ is the binomial edge ideal of a Cohen-Macaulay bipartite
graph. We achieve this by computing the regularity of certain
bipartite subgraphs and some intermediate graphs, called $k$-fan
graphs. In this process, we also obtain a class of graphs which
satisfy the regularity conjecture of Saeedi Madani and Kiani.
\end{abstract}

\section{Introduction}
Let $G$ be a finite simple graph on $[n]$. Herzog et al. in \cite{HH1}
and independently Ohtani in \cite{oh}, introduced the notion of
binomial edge ideal corresponding to a finite simple graph.  Let
$S=K[x_1, \ldots, x_n,y_1, \ldots, y_n]$, where $K$ is a field.  The
binomial edge ideal of the graph $G$ is  $J_G =(x_i y_j - x_j y_i :
\{i,j\} \in E(G), \; i <j)$. Researchers have been trying to establish
connections between combinatorial invariants associated to $G$ and
algebraic invariants associated to $J_G$. In particular, connections
have been established between homological invariants such as depth,
codimension, Betti numbers and Castelnuovo-Mumford regularity of $J_G$
with certain combinatorial invariants associated to $G$, see for
example \cite{her1,HH1,JNR,KM3,MM,Rauf,KM1,KM2}.  In \cite[Theorem
1.1]{MM}, Matsuda and Murai proved that for any graph $G$ on vertex
set $[n]$, $l \leq \reg(S/J_G) \leq n-1$, where $l$ is the length of
longest induced path in $G$. They conjectured that $\reg(S/J_G) = n-1$
if and only if $G$ is the path graph. This conjecture was settled in
affirmative by Kiani and Saeedi Madani in \cite{KM3}. For a graph $G$,
let $c(G)$ denote the number of maximal cliques of $G$. If $G$ is a
closed graph, i.e., if $J_G$ has a quadratic Gr\"obner basis, then
Saeedi Madani and Kiani proved that $\reg(S/J_G) \leq c(G)$,
\cite{KM1}. They conjectured that $\reg(S/J_G) \leq c(G)$ for any
finite simple graph $G$, \cite{KM2}.  In \cite{KM5}, Saeedi Madani and
Kiani proved the conjecture  for generalized block graphs.

Another homological invariant associated with an ideal $I$ is the
depth of $S/I$. While not much is known about the depth of binomial
edge ideals, there are some results on the structure of certain
classes of graphs whose binomial edge is Cohen-Macaulay, which
corresponds to the highest possible depth.  Bolognini et al. studied
the structure of bipartite graphs and characterized the
Cohen-Macaulayness of the binomial edge ideals of bipartite graphs in
\cite{dav}.  They introduced  a family of bipartite graphs, denoted by
$F_m$ and a family of non-bipartite graphs, called $k$-fan graphs,
denoted by $F_k^W(K_n)$, whose binomial edge ideals are Cohen-Macaulay
(see Sections 2 and 3 for the definition).

There are very few classes of graphs for which the regularity of their
binomial edge ideals is known. The upper and lower bounds known are,
in general, far from being sharp for most of the classes of graphs.
In this article, we compute the regularity of the binomial edge ideals
of Cohen-Macaulay bipartite graphs. First, we show that
the $k$-fan graphs $F^W_k(K_n)$ satisfy the upper bound conjectured
by Saeedi Madani and Kiani. It may be noted that $F^W_k(K_n)$ is not
necessarily a generalized block graph or a closed graph. We
also obtain a subclass which attains the upper bound, (Theorem
\ref{3.3}). We then compute the regularity of $k$-pure fan graphs,
(Theorem \ref{3.4}). In \cite{dav}, it
was proved that if $G$ is a connected bipartite graph, then $J_G$ is
Cohen-Macaulay if and only if $G = G_1 * \cdots * G_s$, where $G_i = F_m$ or
$G_i = F_{m_1} \circ \cdots \circ F_{m_t}$ for some $m \geq 1$ and
$m_j \geq 3$, see Section 2 for the
definition of the operations $\circ$ and $*$. By
\cite[Theorem 3.1]{JNR}, it is known that if $G = G_1 * G_2$, then
$\reg(S/J_G) = \reg(S/J_{G_1}) + \reg(S/J_{G_2})$. Therefore, to
compute the regularity of Cohen-Macaulay bipartite graphs, we need to
understand the regularity behavior under the operation $\circ$.
We first show that
$\reg(S/J_{F_m}) = 3$ if $m \geq 2$, (Proposition \ref{4.1}). 
We then compute the regularity of the intermediate graphs such as
$F_{m_1} \circ \cdots \circ F_{m_t}\circ H$, where $H$ is either $F_n$
 or  a fan graph $F_k^W(K_n)$ for some $n \geq 3$, (Theorem \ref{4.6}).
Using these information, we obtain a precise expression for the
regularity of binomial edge ideals of Cohen-Macaulay bipartite graphs,
(Theorem \ref{cm-bipartite}).

\section{Preliminaries}
In this section, we  recall  some notation and fundamental results on
graphs and the corresponding binomial edge ideals which are used
throughout this paper.

Let $G$  be a  finite simple graph with vertex set $V(G)$ and edge set
$E(G)$. A graph $G$ is said to be \textit{bipartite} if 
there is a bipartition of $V(G)=V_1 \sqcup V_2$ such that for each
$i=1,2$, no two of the vertices of $V_i$ are adjacent. 
For a subset $A \subseteq V(G)$, $G[A]$ denotes the induced
subgraph of $G$ on the vertex set $A$, that is, for $i, j \in A$, $\{i,j\} \in E(G[A])$ if and only if $ \{i,j\} \in E(G)$. 
For a vertex $v$, $G \setminus v$ denotes the  induced subgraph of $G$
on the vertex set $V(G) \setminus \{v\}$. A vertex $v \in V(G)$ is
said to be a \textit{cut vertex} if $G \setminus v$ has strictly more
connected components than $G$.
A subset $U$ of $V(G)$ is said to be a 
\textit{clique} if $G[U]$ is a complete graph. 
A vertex $v$ is said to be a \textit{free vertex} if it belongs to
exactly one maximal clique. For a vertex $v$, $N_G(v) = \{u \in V(G) ~
: ~ \{u,v\} \in E(G)\}$ (neighborhood of $v$),  $N_G[v] = N_G(v)
\cup \{v\}$ and $\deg_G(v) = |N_G(v)|$.  A vertex $v$ is said to be
pendant vertex, if $\deg_G(v) =1$.  For a vertex $v$, $G_v$ is the
graph on vertex set $V(G)$ and edge set $E(G_v) =E(G) \cup \{
\{u,w\}: u,w \in N_G(v)\}$. 
We say that a graph $G$ is Cohen-Macaulay if $S/J_G$ is Cohen-Macaulay.
For a graph $G$, by \textit{regularity of} $G$, we mean the regularity
of the binomial edge ideal of $G$.
 
For every $m\geq 1$, $F_m$ denotes the graph on the vertex set $[2m]$ and edge set $E(F_m) =\{ \{2i,2j-1\} : i=1,\ldots,m,j=i,\ldots,m\}$. 
It was shown that the graphs $F_m$'s form basic building blocks of
Cohen-Macaulay bipartite graphs, see \cite{dav} for details.
Here we recall from \cite{dav} the two operations, denoted by $*$ and
$\circ$, which are important in the study of Cohen-Macaulay bipartite
graphs.

\textbf{Operation $*$} : For $i=1,2$, let $G_i$ be a graph with at
least one free vertex $f_i$.  We denote by $G =(G_1,f_1) * (G_2,f_2)$
the graph obtained by identifying the vertices $f_1$ and $f_2$.

\textbf{Operation $\circ$} : For $i=1,2$, let $G_i$ be a graph with at least one pendant vertex $f_i$ and $v_i$ be its neighbor with 
$\deg_{G_i}(v_i) \geq 2$. Then we define $G=(G_1,f_1) \circ (G_2, f_2)$
to be the graph obtained from $G_1$ and $G_2$ by removing the pendant
vertices $f_1,f_2$ and identifying the vertices $v_1$ and $ v_2$.
 
In the above notation, we may suppress $f_1$ and $f_2$ whenever it is
not necessary to emphasize them. Below, we illustrate the definition
of $*$ and $\circ$ with an example.

Let $G$ and $H$ be the graphs as given below:
\vskip 2mm \noindent
\begin{minipage}{\linewidth}
  \begin{minipage}{0.3\linewidth}
	\captionsetup[figure]{labelformat=empty}
\begin{figure}[H]
\begin{tikzpicture}[scale=.6]
\draw (4,2)-- (4,0);
\draw (4,0)-- (3,-2);
\draw (4,0)-- (5,-2);
\draw (3,-2)-- (5,-2);
\draw (5,-2)-- (6,0);
\draw (5,-2)-- (7,-2);
\draw (3,-2)-- (5,-2);
\draw (9,0)-- (9,-2);
\draw (9,-2)-- (10,0);
\draw (10,0)-- (10,-2);
\draw (10,-2)-- (11,0);
\draw (11,0)-- (11,-2);
\draw (9,-2)-- (11,0);
\draw (9,0)-- (9,-2);
\draw (9,-2)-- (11,0);
\begin{scriptsize}
\fill (4,2) circle (1.5pt);
\draw (4.28,2.26) node {$v_1$};
\fill (4,0) circle (1.5pt);
\draw (4.28,0.26) node {$v_2$};
\fill (3,-2) circle (1.5pt);
\draw (3.18,-2.24) node {$v_3$};
\fill (5,-2) circle (1.5pt);
\draw (5.04,-2.32) node {$v_4$};
\fill (6,0) circle (1.5pt);
\draw (6.28,0.26) node {$v_5$};
\fill (7,-2) circle (1.5pt);
\draw (7.16,-2.26) node {$v_6$};
\fill (9,-2) circle (1.5pt);
\draw (9.14,-2.26) node {$u_2$};
\fill (10,-2) circle (1.5pt);
\draw (10.16,-2.26) node {$u_4$};
\fill (11,-2) circle (1.5pt);
\draw (11.1,-2.24) node {$u_6$};
\fill (9,0) circle (1.5pt);
\draw (9.12,0.44) node {$u_1$};
\fill (10,0) circle (1.5pt);
\draw (10.06,0.4) node {$u_3$};
\fill (11,0) circle (1.5pt);
\draw (11.06,0.42) node {$u_5$};
\draw (5.04,-3.02) node {$G$};
\draw (10.16,-3.02) node {$H$};
\end{scriptsize}
\end{tikzpicture}
\end{figure}
\end{minipage}
\begin{minipage}{0.40\linewidth}
\captionsetup[figure]{labelformat=empty}
\begin{figure}[H]
\begin{tikzpicture}[scale=.6]
\draw (4,2)-- (4,0);
\draw (4,0)-- (3,-2);
\draw (4,0)-- (5,-2);
\draw (3,-2)-- (5,-2);
\draw (5,-2)-- (6,0);
\draw (5,-2)-- (7,-2);
\draw (3,-2)-- (5,-2);
\draw (7,-2)-- (9,-2);
\draw (9,-2)-- (10,0);
\draw (10,0)-- (10,-2);
\draw (10,-2)-- (11,0);
\draw (11,0)-- (11,-2);
\draw (9,-2)-- (11,0);
\draw (7,-2)-- (9,-2);
\draw (9,-2)-- (11,0);
\begin{scriptsize}
\fill  (4,2) circle (1.5pt);
\draw (4.28,2.26) node {$v_1$};
\fill  (4,0) circle (1.5pt);
\draw (4.28,0.26) node {$v_2$};
\fill  (3,-2) circle (1.5pt);
\draw (3.18,-2.24) node {$v_3$};
\fill  (5,-2) circle (1.5pt);
\draw (5.14,-2.22) node {$v_4$};
\fill  (6,0) circle (1.5pt);
\draw (6.28,0.26) node {$v_5$};
\fill  (7,-2) circle (1.5pt);
\draw (7.16,-2.26) node {$v_6$};
\fill  (9,-2) circle (1.5pt);
\draw (9.14,-2.26) node {$u_2$};
\fill  (10,-2) circle (1.5pt);
\draw (10.16,-2.26) node {$u_4$};
\fill  (11,-2) circle (1.5pt);
\draw (11.1,-2.24) node {$u_6$};
\fill  (7,-2) circle (1.5pt);
\draw (7.12,-1.56) node {$u_1$};
\fill  (10,0) circle (1.5pt);
\draw (10.06,0.4) node {$u_3$};
\fill  (11,0) circle (1.5pt);
\draw (11.06,0.42) node {$u_5$};
\draw (7.12,-3.02) node {$G*H$};
\end{scriptsize}
\end{tikzpicture}
\end{figure}
\end{minipage}
\begin{minipage}{.25\linewidth}
\captionsetup[figure]{labelformat=empty}
\begin{figure}[H]
\begin{tikzpicture}[scale=.6]
\draw (4,2)-- (4,0);
\draw (4,0)-- (3,-2);
\draw (4,0)-- (5,-2);
\draw (3,-2)-- (5,-2);
\draw (5,-2)-- (5,0);
\draw (3,-2)-- (5,-2);
\draw (5,-2)-- (6,0);
\draw (6,0)-- (6,-2);
\draw (6,-2)-- (7,0);
\draw (7,0)-- (7,-2);
\draw (5,-2)-- (7,0);
\draw (5,-2)-- (7,0);
\begin{scriptsize}
\fill  (4,2) circle (1.5pt);
\draw (4.28,2.26) node {$v_1$};
\fill  (4,0) circle (1.5pt);
\draw (4.26,0.32) node {$v_2$};
\fill  (3,-2) circle (1.5pt);
\draw (3.18,-2.24) node {$v_3$};
\fill  (5,-2) circle (1.5pt);
\draw (4.78,-2.22) node {};
\fill  (5,0) circle (1.5pt);
\draw (5.24,0.36) node {$v_5$};
\fill  (5,-2) circle (1.5pt);
\draw (4.95,-2.22) node {$v_4=u_2$};
\fill  (6,-2) circle (1.5pt);
\draw (6.16,-2.26) node {$u_4$};
\fill  (7,-2) circle (1.5pt);
\draw (7.1,-2.24) node {$u_6$};
\fill  (6,0) circle (1.5pt);
\draw (6.06,0.4) node {$u_3$};
\fill  (7,0) circle (1.5pt);
\draw (7.06,0.42) node {$u_5$};
\draw (5.50,-3.02) node {$G\circ H$};
\end{scriptsize}
\end{tikzpicture}
\end{figure}
\end{minipage}
\end{minipage}

The graph $G*H$ given above is obtained by identifying the vertices
$v_6$ of $G$ and $u_1$ of $H$. By deleting the vertices $v_6$ of $G$ and
$u_1$ of $H$ and identifying the vertices $v_4$ and $u_2$, we obtain
$G\circ H$ as given above.

For a subset $T$ of $[n]$, let $\bar{T} = [n]\setminus T$ and $c_G(T)$
denote the number of connected components of $G[\bar{T}]$. Let $G_1,\cdots,G_{c_{G(T)}}$ be connected 
components of $G[\bar{T}]$. For each $i$, let $\tilde{G_i}$ denote the complete graph on $V(G_i)$ and
$$P_T(G) = (\underset{i\in T} \cup \{x_i,y_i\}, J_{\tilde{G_1}},\cdots, J_{\tilde{G}_{c_G(T)}}).$$ 
It was shown by Herzog et al. that $J_G =  \underset{T \subseteq [n]}\cap P_T(G)$, \cite{HH1}.
For each $i \in T$, if $i$ is a cut vertex of the graph $G[\bar{T} \cup \{i\}]$,
then we say that $T$ has the cut point property. Set $\mathscr{C}(G) =\{\phi \}
\cup \{ T: T \; \text{has the cut point property} \}$.
Throughout this paper, we  use a short exact sequence which allows us
to use induction.
\begin{remark}\label{2.5}
Let $G$ be a finite simple graph and $v$ be a vertex which is not a
free vertex in $G$.   In \cite[Lemma 4.8]{oh}, it was shown that
$J_G = Q_1 \cap Q_2$, where  $Q_1 = J_{G_v}$, $Q_2 = (x_v,y_v) +
J_{G\setminus v}$ and $Q_1 +Q_2 = (x_v,y_v) + J_{G_v \setminus
v}$. This gives rise to the following short exact sequence:
 \begin{equation}\label{2.6}
  0  \longrightarrow  \dfrac{S}{J_{G} } \longrightarrow
  \dfrac{S}{Q_1} \oplus \dfrac{S}{Q_2} \longrightarrow \dfrac{S}{Q_1
  +Q_2 } \longrightarrow 0.
\end{equation}
\end{remark}

The following basic property of regularity is used repeatedly in this
article.
 
\begin{lemma}\label{2.4}
 Let $S$ be a standard graded ring  and $M,N$ and $P$ be finitely generated graded $S$-modules. 
 If $ 0 \rightarrow M \xrightarrow{f}  N \xrightarrow{g} P \rightarrow 0$ is a 
 short exact sequence with $f,g$  
 graded homomorphisms of degree zero, then 
 \begin{enumerate}
  \item $reg(M) \leq max\{reg(N),reg(P)+1\}$.
  \item $reg(M) = reg(N)$, if $reg(N) >reg(P)$. 
 \end{enumerate}
\end{lemma}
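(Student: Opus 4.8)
The plan is to reduce everything to the long exact sequence in graded local cohomology. Writing $\mathfrak{m}$ for the graded maximal ideal of $S$, I would use the description
\[
\reg(L) = \max\{\, i+j \;:\; H^i_{\mathfrak{m}}(L)_j \neq 0 \,\}
\]
valid for any finitely generated graded module $L$ over the standard graded ring $S$ (with the convention $\reg(0)=-\infty$). Applying $H^\bullet_{\mathfrak{m}}(-)$ to $0 \to M \to N \to P \to 0$ produces, for each pair $(i,j)$, the exact strand
\[
H^{i-1}_{\mathfrak{m}}(P)_j \longrightarrow H^i_{\mathfrak{m}}(M)_j \longrightarrow H^i_{\mathfrak{m}}(N)_j .
\]

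For (1), I would fix $(i,j)$ with $H^i_{\mathfrak{m}}(M)_j \neq 0$ and split into two cases according to whether the right-hand arrow is nonzero. If it is, then $H^i_{\mathfrak{m}}(N)_j \neq 0$, so $i+j \le \reg(N)$; if it is zero, exactness forces $H^i_{\mathfrak{m}}(M)_j$ into the image of $H^{i-1}_{\mathfrak{m}}(P)_j$, hence $H^{i-1}_{\mathfrak{m}}(P)_j \neq 0$ and $i+j \le \reg(P)+1$. Either way $i+j \le \max\{\reg(N),\reg(P)+1\}$, and taking the supremum over all admissible $(i,j)$ gives (1).

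For (2), assuming $\reg(N) > \reg(P)$ (so $\reg(N) \ge \reg(P)+1$, as these are integers), part (1) already gives $\reg(M) \le \reg(N)$. For the opposite inequality I would run the same bookkeeping on the strand $H^i_{\mathfrak{m}}(M)_j \to H^i_{\mathfrak{m}}(N)_j \to H^i_{\mathfrak{m}}(P)_j$: a nonzero $H^i_{\mathfrak{m}}(N)_j$ is either hit from $H^i_{\mathfrak{m}}(M)_j$ or maps nontrivially into $H^i_{\mathfrak{m}}(P)_j$, yielding $\reg(N) \le \max\{\reg(M),\reg(P)\}$; combined with $\reg(N) > \reg(P)$ this forces $\reg(N) \le \reg(M)$, hence equality.

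There is no serious obstacle here — this is a standard homological fact, and one could equally well phrase it through the long exact sequence of $\Tor^S_\bullet(-,K)$ together with the resolution description $\reg(L) = \max\{\, j-i : \Tor^S_i(L,K)_j \neq 0 \,\}$. The only points needing a little care are the index shifts in the long exact sequence and the degenerate cases where one of $M,N,P$ vanishes, where the convention $\reg(0)=-\infty$ makes the stated inequalities hold trivially.
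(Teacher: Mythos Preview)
Your argument via the long exact sequence in local cohomology is correct and is one of the standard proofs of this fact. The paper itself does not supply a proof of Lemma~\ref{2.4}: it is stated as a basic property of regularity and simply quoted as a tool, so there is no paper proof to compare against. Your write-up would serve perfectly well as the omitted justification; the only cosmetic point is that the phrase ``either hit from $H^i_{\mathfrak m}(M)_j$ or maps nontrivially into $H^i_{\mathfrak m}(P)_j$'' is slightly informal---the clean statement is that if both $H^i_{\mathfrak m}(M)_j$ and $H^i_{\mathfrak m}(P)_j$ vanish then exactness of $H^i_{\mathfrak m}(M)_j \to H^i_{\mathfrak m}(N)_j \to H^i_{\mathfrak m}(P)_j$ forces $H^i_{\mathfrak m}(N)_j=0$---but the content is the same.
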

 
\section{Regularity of $F_k^W(K_n)$}
In \cite{dav}, Bolognini et al. introduced a family of chordal graphs namely the 
fan of a complete graph $K_n$.

\begin{definition}
Let $K_n$ be the complete graph on the vertex set $[n]$ and $W =
\{v_1, \ldots , v_r\} \subseteq [n]$. Then $F^W(K_n)$ is the graph
obtained from $K_n$ by the following operation:  for every $i = 1,
\ldots ,r$,  attach a complete graph $K_{a_i}$ to $K_n$ in such a way
that $V(K_n) \cap  V(K_{a_i}) = \{v_1, \ldots, v_i\}$, for some $a_i >
i$. We say that the graph $F^W(K_n)$ is obtained by adding a fan to
$K_n$ on the set $W$ and $\{ K_{a_1},\ldots, K_{a_r}\} $ is the branch of
that fan on  $W$.

Let $K_n$ be the complete graph on $[n]$ and $W_1 \sqcup \cdots \sqcup
W_k$ be a partition of a subset $W \subseteq [n]$.  Let $F_k^W(K_n)$
be the graph
obtained from $K_n$ by adding a fan on each set $W_i$.  For each $i\in
\{1,\ldots , k \}$, set $W_i =\{v_{i,1},\ldots , v_{i,r_i}\}$ and
$\{K_{a_{i,1}},\ldots , K_{a_{i,r_i}}\}$ be the branch of the fan on
$W_i$. The graph $F_k^W(K_n)$ is called a $k$-fan of $K_n$ on the set
$W$.

A branch $\{K_{a_{i,1}},\ldots,K_{a_{i,r_i}}\} $ of the fan on $W_i$
is said to be a pure branch if for each $j =1,\ldots,r_i$, $a_{i,j} =
j+1$.  A fan on set $W_i $ is said to be pure fan, if its branch is
pure. If for each $i \in \{1,\ldots,k\}$, fan on set $W_i$ is pure,
then $F_k^W(K_n)$ is said to be a $k$-pure fan graph of $K_n$ on $W$.
\end{definition}

 \begin{example}
Let $G_1$ and $G_2$ be the graphs as shown in the figure below.
  
\begin{minipage}{\linewidth}
\begin{minipage}{0.25\linewidth}
\definecolor{qqqqff}{rgb}{0,0,1}
\captionsetup[figure]{labelformat=empty}
\begin{figure}[H]
\begin{tikzpicture}[scale=.6]
\draw (-1,4)-- (-0.94,2.68);
\draw (-0.94,2.68)-- (0.02,1.6);
\draw (0.02,1.6)-- (1.04,2.74);
\draw (1.04,2.74)-- (1,4);
\draw (1,4)-- (0,5);
\draw (0,5)-- (-1,4);
\draw (-1,4)-- (-2,5);
\draw (-1,4)-- (-1.16,5.66);
\draw (-1.16,5.66)-- (0,5);
\draw (0,5)-- (0,6);
\draw (0,6)-- (-1,4);
\draw (-1,4)-- (1,4);
\draw (1,4)-- (0,6);
\draw (0,5)-- (-0.94,2.68);
\draw (-0.94,2.68)-- (1.04,2.74);
\draw (1.04,2.74)-- (0,5);
\draw (-1,4)-- (0.02,1.6);
\draw (0.02,1.6)-- (1,4);
\draw (1,4)-- (-0.94,2.68);
\draw (-1,4)-- (1.04,2.74);
\draw (0,5)-- (0.02,1.6);
\draw (1.04,2.74)-- (2.14,2.92);
\draw (1.04,2.74)-- (1.34,1.38);
\draw (1.34,1.38)-- (0.02,1.6);
\begin{scriptsize}
\fill (0,5) circle (1.5pt);
\draw(0.14,5.26) node {$2$};
\fill (-1,4) circle (1.5pt);
\draw(-1.26,3.96) node {$1$};
\fill (1,4) circle (1.5pt);
\draw(1.16,4.26) node {$3$};
\fill (1.04,2.74) circle (1.5pt);
\draw(1.18,3) node {$4$};
\fill (-0.94,2.68) circle (1.5pt);
\draw(-1.22,2.62) node {$6$};
\fill (0.02,1.6) circle (1.5pt);
\draw(-0.04,1.38) node {$5$};
\fill (-2,5) circle (1.5pt);
\fill (-1.16,5.66) circle (1.5pt);
\fill (0,6) circle (1.5pt);
\fill (2.14,2.92) circle (1.5pt);
\fill (1.34,1.38) circle (1.5pt);
\end{scriptsize}
\end{tikzpicture}
\caption{$G_1$}
\end{figure}
\end{minipage}
\begin{minipage}{0.25\linewidth}
\captionsetup[figure]{labelformat=empty}
\begin{figure}[H]
\definecolor{qqqqff}{rgb}{0,0,1}
\begin{tikzpicture}[scale=.6]
\draw (-1,4)-- (-0.94,2.68);
\draw (-0.94,2.68)-- (0.02,1.6);
\draw (0.02,1.6)-- (1.04,2.74);
\draw (1.04,2.74)-- (1,4);
\draw (1,4)-- (0,5);
\draw (0,5)-- (-1,4);
\draw (-1,4)-- (-2,5);
\draw (-1,4)-- (-1.16,5.66);
\draw (-1.16,5.66)-- (0,5);
\draw (0,5)-- (0,6);
\draw (0,6)-- (-1,4);
\draw (-1,4)-- (1,4);
\draw (1,4)-- (0,6);
\draw (0,5)-- (-0.94,2.68);
\draw (-0.94,2.68)-- (1.04,2.74);
\draw (1.04,2.74)-- (0,5);
\draw (-1,4)-- (0.02,1.6);
\draw (0.02,1.6)-- (1,4);
\draw (1,4)-- (-0.94,2.68);
\draw (-1,4)-- (1.04,2.74);
\draw (0,5)-- (0.02,1.6);
\draw (1.04,2.74)-- (2.14,2.92);
\draw (1.04,2.74)-- (1.34,1.38);
\draw (1.34,1.38)-- (0.02,1.6);
\draw (-2,5)-- (-2.28,3.96);
\draw (-2.28,3.96)-- (-1,4);
\draw (-1.52,5.08)-- (-1,4);
\draw (-1.52,5.08)-- (0,5);
\draw (-1.52,5.08)-- (-1.16,5.66);
\draw (1.04,2.74)-- (0.88,1.88);
\draw (0.88,1.88)-- (1.34,1.38);
\draw (0.88,1.88)-- (0.02,1.6);
\begin{scriptsize}
\fill  (0,5) circle (1.5pt);
\draw (0.32,5.22) node {$2$};
\fill  (-1,4) circle (1.5pt);
\draw (-1.22,3.74) node {$1$};
\fill  (1,4) circle (1.5pt);
\draw (1.54,4.24) node {$3$};
\fill  (1.04,2.74) circle (1.5pt);
\draw (1.44,3.18) node {$4$};
\fill  (-0.94,2.68) circle (1.5pt);
\draw (-1.2,2.42) node {$6$};
\fill  (0.02,1.6) circle (1.5pt);
\draw (-0.08,1.26) node {$5$};
\fill  (-2,5) circle (1.5pt);
\fill  (-1.16,5.66) circle (1.5pt);
\fill  (0,6) circle (1.5pt);
\fill  (2.14,2.92) circle (1.5pt);
\fill  (1.34,1.38) circle (1.5pt);
\fill  (-2.28,3.96) circle (1.5pt);
\fill  (-1.52,5.08) circle (1.5pt);
\fill  (0.88,1.88) circle (1.5pt);
\end{scriptsize}
\end{tikzpicture}
\caption{$G_2$}
\end{figure}
\end{minipage}
\begin{minipage}{0.4\linewidth}
Let $W = \{1,2,3\} \sqcup \{4,5\}$. Then it can be seen that
 $G_1=F_2^W(K_6)$ is a $2$-pure fan graph while $G_2=F_2^W(K_6)$ is a $2$-fan
  graph which is not a pure fan graph. 
\end{minipage}
\end{minipage}
\end{example}

In \cite[Lemma 3.2]{dav}, it was proved that $F_k^W(K_n)$ is
Cohen-Macaulay. It may be noted that if $|W_i| > 1$ for some $i$, then
$F_k^W(K_n)$ is neither a closed graph nor a generalized block graph.
In this section, we prove that the regularity of the $k$-fan graph
$F_k^W(K_n)$ is at most the number of maximal cliques in it, i.e., the
class of $k$-fan graphs satisfies the regularity upper bound conjecture
of Saeedi Madani and Kiani. If $k=1$, then we denote $F_1^W(K_n)$ by
$F^W(K_n)$. 
 
\begin{theorem}\label{3.3}
With the above notation, let $G =F_k^W(K_n)$ be a $k$-fan graph of the
complete graph $K_n$ on $W,$ where $n\geq2$.  Then
$\reg(S/J_G) \leq c(G)$. Moreover, if for each $i \in \{1,\ldots,k\} $
and  for each $ j \in \{1,\ldots, r_i\}$, $ a_{i,j} >j+1 $,  then
equality holds. 
\end{theorem}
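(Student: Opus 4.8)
The plan is to prove both statements at once by induction on the number $c(G)$ of maximal cliques of $G$, using the short exact sequence of Remark~\ref{2.5} together with Lemma~\ref{2.4}. Throughout I would use two standard facts: $\reg(S/J_{K_m})=1$ for $m\ge 2$ (and $=0$ for $m=1$), and $\reg(S/J_{H_1\sqcup H_2})=\reg(S/J_{H_1})+\reg(S/J_{H_2})$ for a disjoint union. The base case $c(G)=1$ means $G$ is a complete graph, where $\reg(S/J_G)=1=c(G)$ and the strict-inequality hypothesis is vacuous; degenerate situations in which a ``complete part'' has a single vertex are handled by absorbing that vertex into an incident clique, so I will not dwell on them.

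For the inductive step, suppose $c(G)\ge 2$, equivalently $k\ge 1$, and fix the fan on $W_k$. Set $v:=v_{k,1}$, the vertex of $K_n$ lying in every clique $K_{a_{k,1}},\dots,K_{a_{k,r_k}}$ of that fan. Since $n\ge 2$, $v$ belongs to at least two distinct maximal cliques (the one containing $K_n$, and $K_{a_{k,1}}$), so $v$ is not a free vertex and Remark~\ref{2.5} applies, giving \eqref{2.6} with $Q_1=J_{G_v}$, $Q_2=(x_v,y_v)+J_{G\setminus v}$ and $Q_1+Q_2=(x_v,y_v)+J_{G_v\setminus v}$. The crucial step is to identify the three graphs. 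Because $N_G(v)=\bigl(V(K_n)\cup\bigcup_{j=1}^{r_k}V(K_{a_{k,j}})\bigr)\setminus\{v\}$, forming $G_v$ merges $K_n$ and the entire $W_k$-fan into one clique while leaving the other fans untouched; thus $G_v=F_{k-1}^{W'}(K_N)$ with $W'=W_1\sqcup\cdots\sqcup W_{k-1}$ and $N=n+\sum_{j=1}^{r_k}(a_{k,j}-j)$, and hence $G_v\setminus v=F_{k-1}^{W'}(K_{N-1})$. On the other hand, deleting $v$ detaches $K_{a_{k,1}}$ as an isolated $K_{a_{k,1}-1}$ and replaces the $W_k$-fan by a fan on $\{v_{k,2},\dots,v_{k,r_k}\}$ with branch $\{K_{a_{k,2}-1},\dots,K_{a_{k,r_k}-1}\}$, so $G\setminus v=H'\sqcup K_{a_{k,1}-1}$ where $H'$ is again a fan graph over $K_{n-1}$. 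I would then verify that the attachment inequalities are inherited: the non-strict ones $a>(\text{level})$ always, and --- what matters for the equality assertion --- the strict ones $a_{i,j}\ge j+2$ as well, since on passing to $H'$ the drop in level is compensated by the drop in clique size. Counting maximal cliques gives $c(G_v)=c(G_v\setminus v)=c(G)-r_k$ and $c(H')=c(G)-1$.

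With these identifications the two bounds fall out of Lemma~\ref{2.4} applied to \eqref{2.6}, recalling that adjoining $(x_v,y_v)$ does not change regularity. For the upper bound, the inductive hypothesis gives $\reg(S/Q_1)\le c(G)-r_k$, $\reg(S/(Q_1+Q_2))\le c(G)-r_k$, and $\reg(S/Q_2)\le\reg(S/J_{H'})+\reg(S/J_{K_{a_{k,1}-1}})\le(c(G)-1)+1=c(G)$, whence Lemma~\ref{2.4}(1) yields $\reg(S/J_G)\le\max\{c(G)-r_k,\;c(G),\;(c(G)-r_k)+1\}=c(G)$ because $r_k\ge 1$. For the equality, assume every $a_{i,j}\ge j+2$; then $a_{k,1}-1\ge 2$, so $\reg(S/J_{K_{a_{k,1}-1}})=1$, and the strict condition passes to $G_v$, $G_v\setminus v$ and $H'$, so the inductive hypothesis gives the equalities $\reg(S/Q_1)=\reg(S/(Q_1+Q_2))=c(G)-r_k$ and $\reg(S/Q_2)=(c(G)-1)+1=c(G)$. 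Thus in \eqref{2.6} the middle module has regularity $c(G)$, which (again as $r_k\ge 1$) strictly exceeds the regularity $c(G)-r_k$ of the right-hand module, and Lemma~\ref{2.4}(2) forces $\reg(S/J_G)=c(G)$.

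I expect the only genuine work to be the structural bookkeeping in the second paragraph: proving carefully that $G_v$, $G_v\setminus v$ and $G\setminus v$ are again $k$-fan graphs of the stated form (one fewer fan, respectively a smaller fan together with a detached clique, over an enlarged or contracted complete graph), that both the non-strict and the strict attachment inequalities descend, and that the small or degenerate configurations ($r_k=1$, so the $W_k$-fan vanishes on deletion, or a one-vertex complete part) are accounted for correctly. Once that is settled, the rest is the routine two-line invocation of Lemma~\ref{2.4} given above.
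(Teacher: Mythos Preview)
Your approach---a single induction on $c(G)$ using only the short exact sequence~\eqref{2.6}---differs from the paper's, which runs a nested induction on $(k,r_k)$ and disposes of the boundary cases $k=1$ and $r_k=1$ by \emph{exact} formulas (the cone formula of \cite[Theorem~3.19]{KM5} and the $*$-formula of \cite[Theorem~3.1]{JNR}, respectively), invoking~\eqref{2.6} only when $k\ge 2$ and $r_k\ge 2$. Your argument for the inequality $\reg(S/J_G)\le c(G)$ goes through; the structural identifications of $G_v$, $G_v\setminus v$, and $G\setminus v$ are correct, and $c(H')\le c(G)-1$ is enough there.

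The equality assertion, however, has a real gap: the identity $c(H')=c(G)-1$ is not always true. When you delete $v=v_{k,1}$, the base clique shrinks to $K_{n-1}$, and this $K_{n-1}$ may be absorbed into one of the surviving fan cliques, giving $c(H')=c(G)-2$. A concrete instance is $k=2$, $W=[n]$, $W_2=\{v\}$ (so $r_2=1$ and $W_1=[n]\setminus\{v\}$), with all $a_{i,j}\ge j+2$. Then $H'=F_1^{W_1}(K_{n-1})$ with $W_1$ the \emph{entire} vertex set of $K_{n-1}$, so $K_{n-1}\subseteq K_{a_{1,n-1}}$ and $c(H')=n-1=c(G)-2$. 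By your own induction $\reg(S/Q_2)=(c(G)-2)+1=c(G)-1$, which now \emph{coincides} with $\reg(S/Q_1)=\reg(S/(Q_1+Q_2))=c(G)-r_2=c(G)-1$; Lemma~\ref{2.4}(2) no longer applies and the sequence yields only $\reg(S/J_G)\le c(G)$. (The equality \emph{is} true here, but one sees it via $G=K_{a_{2,1}}*F_1^{W_1}(K_n)$ and \cite[Theorem~3.1]{JNR}.) The paper avoids this precisely because it treats $r_k=1$ by the $*$-decomposition, which gives $\reg(S/J_G)=1+c\bigl(F_{k-1}^{W\setminus\{v\}}(K_n)\bigr)=c(G)$ directly, and reserves the short exact sequence for $r_k\ge 2$, where one checks that $K_{n-1}$ remains maximal in $H'$ so that $c(H')=c(G)-1$ genuinely holds. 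To repair your argument you must either insert this case split or choose the fan to peel so as to guarantee $r_k\ge 2$ (and handle the residual all-$r_i=1$ situation separately).
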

\begin{proof}
We prove the assertions by induction on $k$. For $k=1$, set $W_1 = \{v_1, \ldots , v_{r_1}\}$.
 We  proceed by induction on $|W_1|= r_1$. If $r_1 =1$, then result
 follows from \cite[Theorem 3.1]{JNR}. Assume that $r_1 >1$ and that the result is true for $r_1 -1$. 
 Set $G_1= K_{a_1} \setminus v_1$ and $G_2 =F^{W_1 \setminus \{v_1\}}(K_n \setminus v_1)$. Since $G_2$ is the fan graph 
of  $K_n \setminus v_1$ on  $W_1 \setminus \{v_1\}$, by induction, $\reg(S/J_{G_2}) \leq c(G_2)$.
 Note that $G=cone(v_1,G_1 \sqcup G_2)$. By \cite[Theorem 3.19]{KM5}, $$\reg(S/J_G) = \reg(S/J_{G_1}) + \reg(S/J_{G_2}) \leq 1+ c(G_2) =c(G).$$
 Now, assume that $k >1$ and result is true for $k-1$. We proceed by induction on $r_k$. If $r_k =1$, then $G=K_{a_{k,1}} * F_{k-1}^{W\setminus \{v_{k,1}\}}(K_n)$.
 By induction on $k$, $\reg(S/J_{F_{k-1}^{W \setminus \{v_{k,1}\}}(K_n)}) \leq c(F_{k-1}^{W \setminus \{v_{k,1}\}}(K_n))$.
 By \cite[Theorem 3.1]{JNR}, $$\reg(S/J_G) =\reg(S/J_{K_{a_{k,1}}})+ \reg(S/J_{F_{k-1}^{W \setminus \{v_{k,1}\}}(K_n)}) \leq 1+ c(F_{k-1}^{W \setminus \{v_{k,1}\}}(K_n))=c(G).$$ 
 Assume  that $r_k >1$ and the result is true for $r_k  -1$. Since, $v=v_{k,1}$ is not a free vertex, by Remark \ref{2.5}, $J_G =Q_1 \cap Q_2$,
  $Q_1 = J_{G_{v}}$,  $Q_2 =(x_v,y_v)+J_{G\setminus v}$ and $Q_1+Q_2 = (x_v,y_v)+J_{G_v \setminus v}$.   
 Let $G' = F_k^{W \setminus \{v\}}(K_n \setminus v)$. Then $G \setminus v = (K_{a_{k,1}} \setminus v) \sqcup G'$.
 By induction on 
  $r_k$, $\reg(S/J_{G'}) \leq c(G')$ and therefore, $$\reg(S/Q_2)=\reg(S/J_{G\setminus v})=\reg(S/J_{K_{a_{k,1}} \setminus v})+\reg(S/J_{G'})
 \leq 1+c(G\setminus v) = c(G).$$  
 Let $H$ be the complete graph on vertex set $N_G[v]$. Note that $G_{v}$ is ($k-1$)-fan graph of $H$  on  $U = W_1 \sqcup \cdots \sqcup W_{k-1}$ and 
 by induction on $k$, $\reg(S/Q_1)=\reg(S/J_{G_v}) \leq c(G_v)< c(G)$. 
 Also, $G_{v} \setminus v$ is ($k-1$)-fan graph of $H\setminus v$  on the set $U = W_1 \sqcup \cdots \sqcup W_{k-1}$. By induction on $k$,
 $\reg(S/{Q_1+Q_2})=\reg(S/J_{G_v \setminus v})  \leq c(G_v \setminus v) < c(G)$. Hence, by the short exact sequence (\ref{2.6}) and Lemma \ref{2.4}, 
 $\reg(S/J_G)  \leq c(G)$.
\end{proof}
Now, we compute the regularity of $k$-pure fan graph.
This result, along with the regularity of $F_m$'s, helps us to compute
the regularity of Cohen-Macaulay bipartite graphs.
 \begin{theorem}\label{3.4}
Let $G=F_k^W(K_n)$ be a $k$-pure fan graph of $K_n$ on  $W,$
where $n\geq2$. Then $\reg(S/J_G) =k+1$.
 \end{theorem}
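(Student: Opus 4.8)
I would prove the equality $\reg(S/J_G)=k+1$ by a double induction: an outer induction on $k$, with base case $k=0$ (so $G=K_n$ and $\reg(S/J_{K_n})=1$ since $n\ge 2$), and, for fixed $k\ge 1$, an inner induction on $r_k=|W_k|$. The tools are the additivity $\reg(S/J_{G_1*G_2})=\reg(S/J_{G_1})+\reg(S/J_{G_2})$ from \cite[Theorem 3.1]{JNR}, the short exact sequence (\ref{2.6}) of Remark \ref{2.5} applied at the vertex $v=v_{k,1}$ (which is never free, as $\{v_{k,1},w_{k,1}\}$ and $V(K_n)$ are distinct maximal cliques through it), the fact $\reg(S/J_{K_m})=1$ for $m\ge 2$, and the additivity of regularity over connected components. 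The key preliminary, which I would establish first, is the combinatorial claim that the graphs $G_v$, $G\setminus v$ and $G_v\setminus v$ obtained at $v=v_{k,1}$ are again pure fan graphs --- of the complete graphs $K_{n+r_k}$, $K_{n-1}$ (together with one isolated vertex), and $K_{n+r_k-1}$ respectively --- with cores on at least two vertices except in one degenerate case.

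If $r_k=1$, the fan on $W_k$ is a single pendant edge at $v_{k,1}$, so $G=K_2 * F_{k-1}^{W'}(K_n)$ glued at $v_{k,1}$ (free in both factors), where $W'=W_1\sqcup\cdots\sqcup W_{k-1}$; then \cite[Theorem 3.1]{JNR} and the outer hypothesis give $\reg(S/J_G)=\reg(S/J_{K_2})+\reg(S/J_{F_{k-1}^{W'}(K_n)})=1+k=k+1$.

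If $r_k\ge 2$, I would first dispose of the degenerate case $n=2$: then $W\subseteq[2]$ forces $k=1$ and $W_1=\{1,2\}$, so $G$ is a triangle with one pendant vertex and $\reg(S/J_G)=\reg(S/J_{K_3})+\reg(S/J_{K_2})=2=k+1$. For $n\ge 3$, apply (\ref{2.6}) at $v=v_{k,1}$ with $Q_1=J_{G_v}$ and $Q_2=(x_v,y_v)+J_{G\setminus v}$. Making $N_G(v)$ a clique merges the whole fan on $W_k$ into a complete graph on the $(n+r_k)$-element set $N_G[v]$ while leaving the other fans untouched, so $G_v=F_{k-1}^{W'}(K_{n+r_k})$ and $G_v\setminus v=F_{k-1}^{W'}(K_{n+r_k-1})$ are $(k-1)$-pure fans; the outer hypothesis gives $\reg(S/Q_1)=\reg(S/(Q_1+Q_2))=k$. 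On the other hand $G\setminus v$ is an isolated vertex together with $F_k^{W''}(K_{n-1})$, whose last fan is on $\{v_{k,2},\dots,v_{k,r_k}\}$ and has $r_k-1$ vertices, so the inner hypothesis (applicable since $n-1\ge 2$) gives $\reg(S/Q_2)=\reg(S/J_{F_k^{W''}(K_{n-1})})=k+1$. Since then $\reg(S/Q_1\oplus S/Q_2)=k+1>k=\reg(S/(Q_1+Q_2))$, Lemma \ref{2.4}(2) applied to (\ref{2.6}) yields $\reg(S/J_G)=k+1$.

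The step I expect to be the main obstacle is the combinatorial bookkeeping in the $r_k\ge 2$ case --- in particular checking that the last fan of $G\setminus v$ is pure. This reduces to noting that deleting $v=v_{k,1}$ turns the fan clique $K_{a_{k,j}}=K_{j+1}$ on $\{v_{k,1},\dots,v_{k,j},w_{k,j}\}$ (for $2\le j\le r_k$) into $K_j$ on $\{v_{k,2},\dots,v_{k,j},w_{k,j}\}$, which meets the core $K_{n-1}$ in the $(j-1)$-element set $\{v_{k,2},\dots,v_{k,j}\}$, so after reindexing $v_{k,j}\leftrightarrow u_{j-1}$ the new branch sizes satisfy $a'_{j-1}=(j-1)+1$, exactly the purity condition; and to noting that, the $W_i$ being pairwise disjoint, $v_{k,1}$ lies in no clique of the fans on $W_1,\dots,W_{k-1}$, so those fans are carried unchanged into $G_v$, $G\setminus v$ and $G_v\setminus v$. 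Modulo this, the rest is routine; the two points to keep an eye on are that the induction hypotheses are only invoked on pure fans over a complete graph with at least two vertices (hence the separate treatment of $n=2$), and that the inequality required by Lemma \ref{2.4}(2) is strict, which holds because $\reg(S/Q_2)=k+1$ is strictly larger than $\reg(S/(Q_1+Q_2))=k$.
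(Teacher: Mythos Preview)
Your proof is correct and follows essentially the same double induction as the paper: outer induction on $k$, inner induction on $r_k=|W_k|$, with the short exact sequence (\ref{2.6}) applied at $v=v_{k,1}$ to identify $G_v$ and $G_v\setminus v$ as $(k-1)$-pure fans and $G\setminus v$ as an isolated point together with a $k$-pure fan over a smaller core. The only substantive difference is in the base of the outer induction: the paper starts at $k=1$ and handles that case via the cone decomposition $G=\mathrm{cone}(v_1,\,w\sqcup G')$ together with \cite[Theorem 3.19]{KM5}, whereas you start at $k=0$ (where $G=K_n$ and the regularity is $1$) and then run the short exact sequence argument already for $k=1$. Your choice is slightly more self-contained, since it avoids invoking the cone result; it also forces you to treat the degenerate case $n=2$, $r_k\ge2$ separately (so that the core $K_{n-1}$ of $G\setminus v$ still has at least two vertices), which you do correctly and which the paper's cone argument sidesteps.
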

 \begin{proof}
  We  prove this by induction on $k$. For $k=1$, let $W_1 = \{v_1, \ldots , v_{r_1}\}$ and $\{K_{a_1},\ldots,K_{a_{r_1}}\}$ be the 
  branch of the fan on $W_1$.
 We prove this assertion by induction on $|W_1|= r_1$.  If $r_1 =1$, then result
 follows from \cite[Theorem 3.1]{JNR}. Assume that $r_1 >1$ and the result is true for $r_1 -1$. 
 Write $K_{a_{k,1}} =\{v_1,w\}$.
Set $G' =F^{W_1 \setminus \{ v_1\}}(K_n \setminus v_1)$. Since, $G'$ is $1$-pure fan graph 
of $K_n \setminus v_1$ on  $W_1 \setminus \{v_1\}$, it follows from the induction hypothesis that, $\reg(S/J_{G'}) =2$.
Note that $G=cone(v_1,w \sqcup G')$. Therefore,
by \cite[Theorem 3.19]{KM5}, $\reg(S/J_G) = \reg(S/J_{G'}) =2$. 
 
 Now, assume that $k >1$ and result is true for $k-1$.  If $r_k =1$,  then 
 $G=K_{a_{k,1}} * F_{k-1}^{W \setminus \{v_{k,1}\}}(K_n)$. By induction on $k$, $\reg(S/J_{F_{k-1}^{W \setminus \{v_{k,1}\}}(K_n)}) =k$.
 By  \cite[Theorem 3.1]{JNR}, $$\reg(S/J_G) =\reg(S/J_{K_{a_{k,1}}})+ \reg(S/J_{F_{k-1}^{W\setminus \{v_{k,1}\}}(K_n)})=k+1.$$ 
 Assume  that $r_k >1$ and the result is true for $r_k  -1$. Since, $v=v_{k,1}$ is not a free vertex, by Remark \ref{2.5}, $J_G =Q_1 \cap Q_2$,
 $Q_1= J_{G_{v}}$, $Q_2=(x_v,y_v)+ J_{G\setminus v}$ and $Q_1+Q_2 =  (x_v,y_v)+J_{G_{v} \setminus v}$.
 Note that $ G\setminus v =w_{k,1} \sqcup  G''$, where $G''$ 
 is $k$-pure fan graph of $K_n \setminus v$ on $W\setminus \{v\}$ and  $K_{a_{k,1}}=\{v,w_{k,1}\}$. 
 By induction on $r_k$, $\reg(S/J_{G''}) =k+1$ and therefore, $$\reg(S/Q_2)=\reg(S/J_{G\setminus v})=\reg(S/J_{G''}) =k+1.$$
 Let  $H$ be the complete graph on the vertex set $N_G[v]$. Note that
 $G_v$ is a ($k-1$)-pure fan graph of $H$ 
 on  $W' = W_1 \sqcup \cdots \sqcup W_{k-1}$. By induction on $k$, $\reg(S/Q_1)= \reg(S/J_{G_{v}}) =k$.
Also, by induction on $k$,
 $\reg(S/{Q_1+Q_2})=\reg(S/J_{G_v \setminus v}) 
 =k$.  
 Now, using the short exact sequence (\ref{2.6}) and Lemma \ref{2.4},
 we get $\reg(S/J_G) =k+1$.
 \end{proof}
It was proved in \cite[Theorem 1.1]{MM} that $\reg(S/J_G) \geq l$,
where $l$ is the length of longest induced path.  Note that if $k \geq 2$,
then for $F_k^W(K_n)$ the longest induced path has length $3$. We
conclude this section by 
obtaining an improved lower bound for this class of graphs.
 \begin{corollary}
Let $G=F_k^W(K_n)$ be a $k$-fan graph of the complete graph $K_n$ on the set $W$, where $n\geq2$. Then $\reg(S/J_G) \geq k+1$.
 \end{corollary}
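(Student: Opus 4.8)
The plan is to deduce the bound directly from Theorem~\ref{3.4} by finding, inside an arbitrary $k$-fan graph $G = F_k^W(K_n)$, an induced subgraph which is a $k$-pure fan graph, and then invoking the fact that regularity of binomial edge ideals does not decrease under passing to induced subgraphs. First I would recall (or cite, e.g.\ from \cite{MM} or \cite{HH1}) that if $H$ is an induced subgraph of $G$ then $\reg(S/J_H) \leq \reg(S/J_G)$; this is the engine of the argument. So it suffices to exhibit an induced $k$-pure fan subgraph $G' \subseteq G$, because then $\reg(S/J_G) \geq \reg(S/J_{G'}) = k+1$ by Theorem~\ref{3.4}.

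Next I would construct $G'$ explicitly. For each $i \in \{1,\ldots,k\}$, the fan on $W_i = \{v_{i,1},\ldots,v_{i,r_i}\}$ has branch $\{K_{a_{i,1}},\ldots,K_{a_{i,r_i}}\}$ with $a_{i,j} > j$; in particular the first cone $K_{a_{i,1}}$ attached along $\{v_{i,1}\}$ has at least two vertices, so it contains a vertex $w_i \notin V(K_n)$ adjacent to $v_{i,1}$ and (being in a clique attached only along $v_{i,1}$) to no other vertex of $V(K_n)$. I would then let $G'$ be the induced subgraph of $G$ on the vertex set $\{w_1,\ldots,w_k\} \cup V(K_n)$. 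By construction $G'[V(K_n)] = K_n$, each $w_i$ is adjacent exactly to $v_{i,1}$ among these vertices, and the $v_{i,1}$ are distinct (the $W_i$ are disjoint), so $G'$ is precisely $F_k^{W'}(K_n)$ with $W_i' = \{v_{i,1}\}$ a singleton and a pure branch $\{K_2\}$ on each — that is, a $k$-pure fan graph of $K_n$. One small point to check is that $G'$ is genuinely an induced subgraph of $G$: the only edges of $G$ among the chosen vertices are the edges of $K_n$ and the edges $\{w_i, v_{i,1}\}$, since any $K_{a_{i,j}}$ with $j \geq 2$ meets $\{w_1,\ldots,w_k\}$ in at most $\{w_i\}$ if at all, and $w_i$ lies only in $K_{a_{i,1}}$.

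With $G'$ in hand, Theorem~\ref{3.4} gives $\reg(S/J_{G'}) = k+1$, and monotonicity of regularity under induced subgraphs yields $\reg(S/J_G) \geq k+1$, completing the proof. The main obstacle — really the only thing requiring care — is the verification that the chosen vertex set induces exactly the claimed $k$-pure fan graph and not something with extra edges (which would still be an induced subgraph but possibly not a pure fan); this is handled by the observation that the cones $K_{a_{i,j}}$ intersect $V(K_n)$ in nested sets $\{v_{i,1},\ldots,v_{i,j}\}$ and are otherwise vertex-disjoint across different $i$, so no vertex $w_i$ acquires an unwanted neighbour. Everything else is immediate from the two theorems already proved in this section. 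An alternative, if one preferred not to invoke induced-subgraph monotonicity, would be to rerun the short exact sequence induction of Theorem~\ref{3.4} keeping only the lower-bound inequalities, but the subgraph argument is cleaner.
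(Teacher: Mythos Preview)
Your proof is correct and follows essentially the same approach as the paper: both construct the induced subgraph on $[n]\cup\{w_1,\ldots,w_k\}$ (the complete graph $K_n$ with one whisker at each $v_{i,1}$) and invoke monotonicity of regularity under induced subgraphs. The only cosmetic difference is that you compute $\reg(S/J_{G'})=k+1$ by recognizing $G'$ as a $k$-pure fan and applying Theorem~\ref{3.4}, whereas the paper obtains the same value by viewing $G'$ as an iterated $*$-product of $K_n$ with $k$ copies of $K_2$ and applying \cite[Theorem~3.1]{JNR}.
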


 \begin{proof}
Let $A=[n]
\sqcup \{w_{i,1}: i=1,\ldots,k\}$, where $w_{i,1} \in V(K_{a_{i,1}})
\setminus [n]$. Then $G[A]$ is the induced subgraph of $G$ which is obtained
by adding a whisker each to $k$ vertices of $K_n$. By applying \cite[Corollary
2.2]{MM} and \cite[Theorem 3.1]{JNR}, we get $\reg(S/J_G) \geq \reg(S/J_{G[A]}) = k+1$.
\end{proof}
 
\section{Regularity of Cohen-Macaulay bipartite graphs}

In this section, we compute the regularity of binomial edge ideals of
Cohen-Macaulay bipartite graphs. As a first step, we compute the
regularity of $F_m$, for $m\geq2$, which are the basic building blocks
of a Cohen-Macaulay bipartite graph.
Note that $F_1$ is $K_2$, therefore, $\reg(S/J_{F_1}) =1$.
 
\begin{proposition}\label{4.1}
 For each $m\geq 2$, $\reg(S/J_{F_m}) =3$.
\end{proposition}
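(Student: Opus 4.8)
The plan is to induct on $m$, using the short exact sequence from Remark \ref{2.5} applied to a well-chosen non-free vertex of $F_m$. First I would handle the base case $m = 2$: here $F_2$ is a small explicit graph on $4$ vertices (a triangle $\{2,3,4\}$ together with the edge $\{2,1\}$ — i.e. a triangle with a whisker), so $\reg(S/J_{F_2})$ can be computed directly. In fact $F_2 = \operatorname{cone}(2, \{1\} \sqcup K_{\{3,4\}})$ is a block graph with two maximal cliques, and by \cite[Theorem 3.1]{JNR} one gets $\reg(S/J_{F_2}) = 1 + 1 = 2 + 1$; I would double-check the labelling to confirm the value is $3$, adjusting to $F_2$ being two triangles glued at an edge if that is the correct reading of the edge set, in which case $c(F_2) = 2$ and again $\reg = 3$ should come out. (The edge set $E(F_m) = \{\{2i, 2j-1\} : 1 \le i \le m,\ i \le j \le m\}$ for $m=2$ gives edges $\{2,1\},\{2,3\},\{4,3\}$, a path $P_4$ reordered — so actually $F_2$ is a path on $4$ vertices and $\reg(S/J_{F_2}) = 3$ by Matsuda–Murai; this is the cleanest base case.)

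For the inductive step, assume $m \geq 3$ and $\reg(S/J_{F_{m-1}}) = 3$. I would pick a vertex $v$ of $F_m$ that is not free — a natural candidate is one of the vertices lying in several of the cliques, say $v = 2m-1$ or $v=2$ depending on the structure — and write $J_{F_m} = Q_1 \cap Q_2$ with $Q_1 = J_{(F_m)_v}$, $Q_2 = (x_v,y_v) + J_{F_m \setminus v}$, and $Q_1 + Q_2 = (x_v,y_v) + J_{(F_m)_v \setminus v}$. The key structural facts to establish are: (i) $F_m \setminus v$ decomposes as a disjoint union of an isolated vertex (or an edge) with a copy of $F_{m-1}$, so that $\reg(S/Q_2) = \reg(S/J_{F_{m-1}}) + (\text{$0$ or $1$}) = 3$; (ii) $(F_m)_v$ is a chordal graph — in fact after adding edges among $N(v)$ it becomes (close to) a cone, so $\reg(S/Q_1) \leq 3$, and likewise $\reg(S/(Q_1+Q_2)) \leq 3$, but with at least one of these strictly less than $3$ so that Lemma \ref{2.4}(1) gives $\reg(S/J_{F_m}) \leq \max\{\reg(S/Q_1), \reg(S/Q_2), \reg(S/(Q_1+Q_2))+1\} = 3$. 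For the lower bound $\reg(S/J_{F_m}) \geq 3$, it suffices to exhibit an induced $P_4$ (a path of length $3$) inside $F_m$ and invoke \cite[Theorem 1.1]{MM}; for $m \ge 2$ the vertices $1, 2, 3, 4$ with edges $\{1,2\},\{2,3\},\{3,4\}$ (present since $\{2,1\},\{2,3\},\{4,3\} \in E(F_m)$) form such a path provided $\{1,4\},\{1,3\},\{2,4\}\notin E(F_m)$, which I would verify from the edge-set formula.

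The main obstacle I anticipate is step (ii): controlling $\reg(S/J_{(F_m)_v})$ and $\reg(S/J_{(F_m)_v \setminus v})$. Adding all edges among $N_{F_m}(v)$ may not immediately present $(F_m)_v$ as a graph whose regularity is already known (a cone, a block graph, or a smaller $F_{m'}$), so I may need a secondary induction or a direct clique-counting argument, possibly using \cite[Theorem 3.19]{KM5} for cones or the regularity bound for block/generalized block graphs to bound these terms by $3$ (and to get the strict inequality needed to keep the $+1$ from the third term from pushing the bound up to $4$). If the straightforward choice of $v$ does not make $(F_m)_v$ tractable, an alternative is to use the known description of $F_m$ as $F_{m-1}$ with a suitable free vertex/pendant attachment and apply the operation-$*$ additivity \cite[Theorem 3.1]{JNR} together with a more careful decomposition; but I expect the short exact sequence route with a carefully chosen $v$ to close cleanly once the auxiliary graphs are identified.
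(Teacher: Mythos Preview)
Your plan is essentially the paper's proof: induct on $m$, use $v=2m-1$ in the short exact sequence of Remark~\ref{2.5}, and identify $F_m\setminus v = F_{m-1}\sqcup\{2m\}$ so that $\reg(S/Q_2)=3$ by induction. Your base case is also exactly the paper's: $F_2$ is a path on four vertices, so $\reg(S/J_{F_2})=3$ by Matsuda--Murai.

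The one place where your proposal is incomplete is precisely the obstacle you flag in step~(ii). The paper resolves it cleanly by observing that $(F_m)_v$ is a $1$-\emph{pure} fan graph $F^{W'}(H)$, where $H$ is the complete graph on $N_{F_m}[v]=\{2,4,\ldots,2m,2m-1\}$ and the odd vertices $1,3,\ldots,2m-3$ are the added branch vertices (vertex $2j-1$ being adjacent exactly to $\{2,4,\ldots,2j\}$). Theorem~\ref{3.4} then gives $\reg(S/Q_1)=2$, and the same applies to $(F_m)_v\setminus v$, giving $\reg(S/(Q_1+Q_2))=2$. With the exact values $2,3,2$ in hand, Lemma~\ref{2.4}(2) yields $\reg(S/J_{F_m})=3$ directly, so your separate lower-bound argument via an induced $P_4$ becomes unnecessary. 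Your proposed workarounds (cone structure via \cite[Theorem~3.19]{KM5}, block-graph bounds) would in effect amount to re-proving Theorem~\ref{3.4}; the missing step is simply to recognize $(F_m)_v$ as a $1$-pure fan and invoke that result.
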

\begin{proof}
We prove the assertion by induction on $m$.
Observe that $F_2$ is a path on $4$ vertices, therefore
$\reg(S/J_{F_2}) = 3$.
 
Assume now that $m \geq 3$ and that the result is true for $m-1$. Since $v=2m-1$ is not a free vertex of $F_m$, by Remark \ref{2.5}, 
 $J_{F_m} = Q_1 \cap Q_2$, 
 $Q_1=J_{{{(F_m)}_v}}$, $Q_2= (x_v,y_v)+J_{{F_m} \setminus v}$ and
 $Q_1+Q_2 = (x_v,y_v)+ J_{{{(F_m)}_v} \setminus v}$. 
Note that ${(F_m)}_v = F^{W'}(H)$ is the $1$-pure fan graph of $H$ on the set $W'=\{2,4,\ldots,2m\}$,
where $H$ is a complete graph on vertex set $N_{F_m}[v]$.
 By Theorem \ref{3.4}, $\reg(S/Q_1)=
 \reg(S/J_{F^{W'}(H)}) =2$. Since, ${F_m} \setminus v = F_{m-1} \sqcup \{2m\}$, by induction on $m$, $\reg(S/Q_2)= \reg(S/J_{F_{m-1}}) =3 $. 
 Note that ${(F_m)}_v \setminus v = F^{W'}(H\setminus v)$ is the $1$-pure fan  graph of $H\setminus v$ on $W'$. It follows from  Theorem \ref{3.4} that  
 $\reg(S/{Q_1+Q_2}) = \reg(S/J_{{{(F_m)}_v} \setminus v}) =2$. Thus, by the short exact sequence (\ref{2.6}) and Lemma \ref{2.4}, $\reg(S/J_{F_m})=3$. Hence, the assertion follows. 
\end{proof}

It may be noted that for $F_m$, any maximal induced path has length $3$.
Therefore, one can say that $F_m$'s have minimal regularity, in the
sense that it attains the lower bound given by Matsuda and Murai,
\cite{MM}.

 \begin{remark}\label{4.2}
For the operation $\circ$, in \cite{dav}, the authors
assumed that $\deg_{G_i}(v_i) \geq 3$, for each $i$. By allowing
$\deg_{G_i}(v_i) = 2$, we can apply the operation $\circ$ with $F_2$ as one
of the graphs. 
If $F_{m_1}$ is a graph with $m_1\geq 2$, then $F_{m_1} \circ F_2 =F_{m_1}*F_1$. By \cite[Theorem 3.1]{JNR}, 
$\reg(S/J_{F_{m_1} \circ F_2})
 =\reg(S/J_{F_{m_1} * F_1}) = 3+1 =4$.
 \end{remark}
We now compute the regularity of 
$F_{m_1} \circ F_{m_2}$
in terms of the regularities of $F_{m_1}$ and $F_{m_2}$.
\begin{proposition}\label{4.3}
Let $m_1,m_2\geq 3$ and $G=(F_{m_1},f_1) \circ (F_{m_2},f_2)$. Then 
\[\reg(S/J_G)=\reg(S/J_{F_{m_1 -1}})+\reg(S/J_{F_{m_2 -1}}) =6.\]
 \end{proposition}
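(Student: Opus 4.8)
The plan is to apply the short exact sequence (\ref{2.6}) at a suitable cut vertex of $G$, reducing everything to graphs whose regularity is already known. Label the vertices of $F_{m_i}$ as in its definition, so that $f_i$ may be taken to be the pendant vertex $1$ and $v_i$ its neighbour $2$, which has degree $m_i\geq 3$. Then $G$ is obtained by deleting $f_1,f_2$ from $F_{m_1},F_{m_2}$ and identifying $v_1$ with $v_2$; call the resulting vertex $v$. In $F_{m_i}$ the neighbours of $2$ are the odd-indexed vertices $1,3,\ldots,2m_i-1$, which form an independent set, so $N_G(v)$ is an independent set of size $(m_1-1)+(m_2-1)\geq 4$. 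In particular $v$ lies in several maximal cliques, hence is not a free vertex, and Remark \ref{2.5} applies: $J_G=Q_1\cap Q_2$ with $Q_1=J_{G_v}$, $Q_2=(x_v,y_v)+J_{G\setminus v}$ and $Q_1+Q_2=(x_v,y_v)+J_{G_v\setminus v}$.

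First I would compute $\reg(S/Q_2)$. Removing $v$ disconnects $G$, and the component coming from $F_{m_i}$ is $F_{m_i}$ with its two smallest-indexed vertices deleted; the relabelling $k\mapsto k-2$ identifies this graph with $F_{m_i-1}$. Thus $G\setminus v=F_{m_1-1}\sqcup F_{m_2-1}$, and since $m_i-1\geq 2$, Proposition \ref{4.1} together with \cite[Theorem 3.1]{JNR} gives $\reg(S/Q_2)=\reg(S/J_{G\setminus v})=\reg(S/J_{F_{m_1-1}})+\reg(S/J_{F_{m_2-1}})=6$.

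The core of the argument — and the step I expect to be the main obstacle — is to identify $G_v$ and $G_v\setminus v$. Let $H$ be the complete graph on $N_G[v]$. Since $N_G(v)$ is independent in $G$, forming $G_v$ turns $N_G[v]$ into the clique $H$, while the only remaining edges of $G$ are those of $F_{m_i}\setminus f_i$ not meeting $v$: for each even-indexed vertex $2t$ of the $i$-th side ($2\leq t\leq m_i$) there is an edge from $2t$ to each of the odd vertices $2t-1,2t+1,\ldots,2m_i-1$. As these odd vertices all lie in $H$, the vertex $2t$ together with them forms a clique attached to $H$ along a subset of $N_G(v)$; ordering the even-indexed vertices of the $i$-th side as $2m_i,2m_i-2,\ldots,4$, the $j$-th such clique meets $H$ in exactly $j$ vertices. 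Hence each side contributes a pure fan, and $G_v=F_2^{W}(H)$ is a $2$-pure fan graph of $H$ on $W=W_1\sqcup W_2$, where $W_i$ is the set of odd-indexed vertices of the $i$-th side; since $v\notin W_1\cup W_2$, deleting $v$ gives $G_v\setminus v=F_2^{W}(H\setminus v)$, again a $2$-pure fan graph. Both $H$ and $H\setminus v$ have at least two vertices, so Theorem \ref{3.4} yields $\reg(S/Q_1)=\reg(S/J_{G_v})=3$ and $\reg(S/(Q_1+Q_2))=\reg(S/J_{G_v\setminus v})=3$.

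It then remains to feed these into (\ref{2.6}): the middle module $S/Q_1\oplus S/Q_2$ has regularity $\max\{3,6\}=6$, which strictly exceeds $\reg(S/(Q_1+Q_2))=3$, so Lemma \ref{2.4}(2) gives $\reg(S/J_G)=6=\reg(S/J_{F_{m_1-1}})+\reg(S/J_{F_{m_2-1}})$, completing the proof. Apart from the two structural identifications (of $G\setminus v$ and of $G_v$, $G_v\setminus v$), every step is a direct relabelling or an invocation of Proposition \ref{4.1}, Theorem \ref{3.4}, and Lemma \ref{2.4}; no induction is needed.
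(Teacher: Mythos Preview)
Your proof is correct and follows essentially the same approach as the paper: apply Ohtani's decomposition at the identified vertex $v$, recognise $G\setminus v$ as $F_{m_1-1}\sqcup F_{m_2-1}$, recognise $G_v$ and $G_v\setminus v$ as $2$-pure fan graphs, and conclude via Theorem~\ref{3.4}, Proposition~\ref{4.1}, and Lemma~\ref{2.4}. The only cosmetic difference is that the paper takes $f_i$ to be the pendant vertex $2m_i$ (so $v=2m_i-1$) rather than $f_i=1$ (so $v=2$); as the paper itself observes, all such choices yield isomorphic graphs, so this is immaterial.
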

\begin{proof}
Let $V(F_{m_1}) = \{u_1, \ldots, u_{2m_1}\}$ and $V(F_{m_2}) =
\{w_1, \ldots, w_{2m_2}\}$. In $F_{m_1}$, there are two vertices of
degree $1$, namely $u_1$ and $u_{2m_1}$. So is the case for $F_{m_2}$.
It may be noted that the graphs obtained by different choices of $f_1$ and
$f_2$ are isomorphic. Hence, without loss of generality, we may assume
that $f_1 = u_{2m_1}$ and $f_2 = w_{2m_2}$. Let $v = u_{2m_1-1} =
w_{2m_2-1}$ in $G$.
Since, $v$ is not a free vertex of the graph $G$, by Remark \ref{2.5},
there exist $Q_1 = J_{G_v} $ and $Q_2 =(x_v,y_v) +J_{G\setminus v}$ so
that $J_G = Q_1 \cap Q_2$ and $Q_1+Q_2 = (x_v,y_v)+J_{G_v \setminus v}$.
Let $H$ be the complete graph on vertex set $N_G[v].$
Note that $G_v = F_2^W(H)$ is a $2$-pure fan graph of $H$ on $W =
N_G(v)$, $G \setminus v = F_{m_1-1} \sqcup F_{m_2-1}$ and $G_v
\setminus v = F_2^W(H \setminus v)$ which is a $2$-pure fan of $H
\setminus v$ on $W$. Therefore, it follows from Theorem \ref{3.4} and
Proposition \ref{4.1} that
\[
  \reg(S/Q_1)=3, \reg(S/Q_2) = \reg(S/J_{F_{m_1 -1}})+\reg(S/J_{F_{m_2 -1}})=6 \text{ and }
 \reg(S/{Q_1+Q_2}) =3.\]
Hence, it follows from the short exact sequence (\ref{2.6}) and Lemma
\ref{2.4} that $$\reg(S/J_G) =\reg(S/J_{F_{m_1 -1}})+\reg(S/J_{F_{m_2 -1}})= 6.$$
\end{proof}
For the rest  of the section, we assume that $F_k^W(K_n)$ is a
$k$-pure fan graph.  
\begin{proposition}\label{4.4}
For $m,n\geq3$, let $G=(F_m,f_1) \circ (F_k^W(K_n),f_2)$, where $W =
W_1 \sqcup \cdots \sqcup W_k \subseteq [n]$. Write $v = v_1 = v_2$ in $G$.
Assume that $|W_i|\geq 2$ for some $i$ and $v \in W_i$. Then 
\[\reg(S/J_G)= \reg(S/J_{F_{m -1}})+\reg(S/J_{F_k^W(K_n) \setminus
\{v,f_2\}})=k+4.\]
 \end{proposition}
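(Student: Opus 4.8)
The plan is to run the short exact sequence \eqref{2.6} at the vertex $v$, exactly as in the proofs of Proposition~\ref{4.3} and Theorem~\ref{3.4}. Fixing coordinates for the two $\circ$-factors, we may assume the deleted pendant of $F_m$ is $2m$ with neighbour $v_1 = 2m-1$; since the hypothesis says $v\in W_i$ with $|W_i|\ge 2$, the deleted pendant of $F_k^W(K_n)$ must be the leaf $w_{i,1}$ of the branch clique $K_{a_{i,1}} = K_2$, so that $v = v_1 = v_2 = v_{i,1}$. Because $v$ lies both in the maximal clique $K_{a_{i,2}}$ (which still occurs in $G$) and in the clique of $F_k^W(K_n)$ coming from $K_n$, it is not a free vertex of $G$, so Remark~\ref{2.5} gives $J_G = Q_1\cap Q_2$ with $Q_1 = J_{G_v}$, $Q_2 = (x_v,y_v)+J_{G\setminus v}$ and $Q_1+Q_2 = (x_v,y_v)+J_{G_v\setminus v}$. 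By \eqref{2.6} and Lemma~\ref{2.4}(2) it then suffices to prove $\reg(S/Q_2) = k+4$ while $\reg(S/Q_1)$ and $\reg(S/(Q_1+Q_2))$ are both strictly smaller.

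For $Q_2$, deleting $v$ separates the two $\circ$-factors, so $G\setminus v = F_{m-1}\sqcup\big(F_k^W(K_n)\setminus\{v,f_2\}\big)$, the first summand because removing the pendant $f_1$ and then $v_1$ from $F_m$ leaves precisely $F_{m-1}$. As regularity is additive over connected components, $\reg(S/Q_2) = \reg(S/J_{F_{m-1}}) + \reg(S/J_{F_k^W(K_n)\setminus\{v,f_2\}})$, and $\reg(S/J_{F_{m-1}}) = 3$ by Proposition~\ref{4.1} since $m-1\ge 2$. Next I would check that $F_k^W(K_n)\setminus\{v,f_2\}$ is again a $k$-pure fan graph: deleting $v = v_{i,1}$ replaces $K_n$ by $K_{n-1}$, isolates $f_2 = w_{i,1}$ (which we then delete), turns the branch on $W_i$ into the pure branch on $W_i\setminus\{v\} = \{v_{i,2},\dots,v_{i,r_i}\}$, and leaves the branches on the other $W_l$ intact; since $n-1\ge 2$, Theorem~\ref{3.4} gives regularity $k+1$. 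Hence $\reg(S/Q_2) = 3+(k+1) = k+4$, which also establishes the displayed equality of the statement.

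For $Q_1$ and $Q_1+Q_2$, the key point is that $G_v$ is the $k$-pure fan graph $F_k^{W'}(H)$, where $H$ is the complete graph on $N_G[v]$ and $W'$ is the disjoint union of the set of even vertices $\{2,4,\dots,2m-2\}$ of the $F_m$-factor (which carries a pure fan whose branch cliques are attached at the odd vertices $1,3,\dots,2m-3$) together with the sets $W_l$ for $l\ne i$; the entire branch on $W_i$ disappears into $H$ because every clique $K_{a_{i,j}}$ with $j\ge 2$ already lies inside $N_G[v]$. Since $v\notin W'$ and $v$ meets none of the attached branch cliques, $G_v\setminus v$ is likewise the $k$-pure fan $F_k^{W'}(H\setminus v)$, and $H$, $H\setminus v$ have at least two vertices. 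So Theorem~\ref{3.4} yields $\reg(S/Q_1) = \reg(S/(Q_1+Q_2)) = k+1 < k+4$, and Lemma~\ref{2.4}(2) applied to \eqref{2.6} finally gives $\reg(S/J_G) = \reg(S/Q_2) = k+4$.

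The step I expect to be the main obstacle is the combinatorial bookkeeping behind these identifications: one must verify that forming $G_v$ absorbs into the central clique $N_G[v]$ exactly the branch cliques sitting inside it — all of the $W_i$-branch and none of the others — and that the surviving branches, together with the fan coming from the $F_m$-side, remain \emph{pure} with the right ordering, so that Theorem~\ref{3.4} applies verbatim. Once these structural facts are in place, the rest is a routine combination of Proposition~\ref{4.1}, Theorem~\ref{3.4} and Lemma~\ref{2.4}.
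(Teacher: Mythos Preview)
Your proposal is correct and follows essentially the same route as the paper: split at $v$ via Remark~\ref{2.5}, identify $G\setminus v$ as $F_{m-1}\sqcup\big(F_k^W(K_n)\setminus\{v,f_2\}\big)$ to get $\reg(S/Q_2)=k+4$ from Proposition~\ref{4.1} and Theorem~\ref{3.4}, and recognize $G_v$ as the $k$-pure fan $F_k^{W'}(H)$ on $W'=N_{F_m\setminus f_1}(v)\sqcup(W\setminus W_i)$ to get $\reg(S/Q_1)=k+1$. The only cosmetic difference is that the paper bounds $\reg(S/(Q_1+Q_2))$ by observing $G_v\setminus v$ is an induced subgraph of $G_v$ and invoking \cite[Corollary~2.2]{MM}, whereas you compute it exactly as $k+1$ by identifying $G_v\setminus v$ with $F_k^{W'}(H\setminus v)$; either suffices for Lemma~\ref{2.4}(2).
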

\begin{proof}
Without loss of generality, assume that $|W_1| \geq 2$ and $v \in
W_1$.
Since $v$ is not a free vertex of $G$, by Remark \ref{2.5}, there exist
$Q_1 = J_{G_v}$ and $Q_2 =(x_v,y_v)+J_{G\setminus v}$ such that 
$J_G=Q_1 \cap Q_2$ and $Q_1+Q_2 = (x_v,y_v)+ J_{G_v \setminus v}$. Let
$H$ be the  complete graph on $N_G[v]$. Note that $G_v =F_k^{W'}(H)$
is a $k$-pure fan  of $H$ on $W'=N_{F_m \setminus f_1}(v) \sqcup
(W \setminus W_1)$.
By Theorem \ref{3.4}, $\reg(S/Q_1)= \reg(S/J_{G_v}) = k+1$. Since $K_{a_{1,1}} =\{v,f_2\}$ and
$G\setminus v = F_{m-1} \sqcup (F_k^{W}(K_n)\setminus \{v,f_2\})$,  by
Proposition \ref{4.1} and Theorem \ref{3.4}, we get

$$\reg(S/Q_2) =\reg(S/J_{G\setminus v}) = \reg(S/J_{F_{m-1}})+ \reg(S/J_{F_k^W(K_n) \setminus \{v,f_2\}})  =k+4.$$
Since $G_v \setminus v$ is an induced subgraph of $G_v$,
$\reg(S/(Q_1+Q_2)) = \reg(S/J_{G_v \setminus v}) \leq \reg(S/J_{G_v}) = k+1$.
Hence we  conclude from the short exact sequence (\ref{2.6}) and
Lemma \ref{2.4} that 
$$\reg(S/J_G) =\reg(S/J_{F_{m -1}})+\reg(S/J_{F_k^W(K_n)\setminus \{v,f_2\}})= k+4.$$
 \end{proof}

\begin{remark}\label{4.5}
\begin{enumerate}
   \item In Proposition \ref{4.4}, we had assumed that $m\geq 3$. If $m=2$,
	then $F_m$ is a path of length $3$. 
  Since $G=F_2 \circ F_k^W(K_n) = F_1 * F_k^W(K_n)$, by \cite[Theorem 3.1]{JNR},
  $$\reg(S/J_G) =\reg(S/J_{F_1})+\reg(S/J_{F_k^W(K_n)})=k+2.$$
  
  \item We had also assumed that $|W_i| \geq 2$ for some $i$.
	If  $|W_i|=1$ for each $i$, then observe that $G \setminus v = F_{m-1} \sqcup
	F_{k-1}^{W \setminus \{v\}}(K_n \setminus v)$ and hence
	$\reg(S/Q_2) = k+3$. Note that the regularities of $S/Q_1$ and
	$S/(Q_1+Q_2)$ remain the same. Therefore, it follows that
	$\reg(S/J_G) = k+3$.
 \end{enumerate}
 \end{remark}

We now study the regularity of graphs obtained by composing several
$F_m$'s with a pure fan graph using the operation $\circ$.

\begin{theorem}\label{4.6}
Let $n \geq 3$ and $H$ denote either $F_n$ or $F_k^W(K_n)$ with $W = W_1 \sqcup
\cdots \sqcup W_k$ and $|W_i| \geq 2$ for some $i$. Let $G= F_{m_1} \circ \cdots \circ F_{m_t} \circ (H,f)$ be a graph with $t\geq2$ 
and for each $i\in [t]$, $m_i \geq 3$. Let $V(F_{m_1} \circ \cdots
\circ F_{m_t}) \cap V(H)= \{v\}$ and $f$ be a pendant vertex in $N_H(v)$. 
If $H = F_k^W(K_n)$, then assume that $v\in W_i$ and $|W_i| \geq 2$.
Then  
$$\reg(S/J_G) = \reg(S/F_{m_1 -1})+ \reg(S/F_{m_2 -2})+\cdots + \reg(S/F_{m_t -2})+\reg(S/J_{H\setminus \{v,f\}}).$$
\end{theorem}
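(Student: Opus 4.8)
The plan is to argue by induction on $t$, using in each step the Ohtani decomposition of Remark~\ref{2.5} at the vertex $v=V(F_{m_1}\circ\cdots\circ F_{m_t})\cap V(H)$. Since $v$ lies in more than one maximal clique of $G$ (it has neighbours of degree $\ge 2$ both inside $F_{m_t}$ and inside $H$), it is not free, so $J_G=Q_1\cap Q_2$ with $Q_1=J_{G_v}$, $Q_2=(x_v,y_v)+J_{G\setminus v}$ and $Q_1+Q_2=(x_v,y_v)+J_{G_v\setminus v}$, giving the short exact sequence~(\ref{2.6}). I would aim to show that $\reg(S/Q_2)$ equals the asserted expression while $\reg(S/Q_1)$ and $\reg(S/(Q_1+Q_2))$ are strictly smaller, and then conclude by Lemma~\ref{2.4}(2).

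The first and main combinatorial step is to describe $G\setminus v$. As $v$ is the only vertex common to the $F$-chain and $H$, deleting it disconnects $G$ into $G'\sqcup(H\setminus\{v,f\})$, where $G'$ is $F_{m_1}\circ\cdots\circ F_{m_t}$ with $v$ (the neighbour in $F_{m_t}$ of the pendant used to glue $H$) removed. A direct computation with the edge set of $F_{m_t}$ shows that, when $m_t\ge 4$, $G'=F_{m_1}\circ\cdots\circ F_{m_{t-1}}\circ F_{m_t-1}$, a $\circ$-chain of exactly the same shape with $F_{m_t}$ replaced by $F_{m_t-1}$; using $F_m\setminus\{\text{pendant, its neighbour}\}\cong F_{m-1}$ and applying the induction hypothesis (with $F_{m_t-1}$ playing the role of $H$), or Proposition~\ref{4.3} when only one $F$-factor survives, yields $\reg(S/J_{G'})=\reg(S/J_{F_{m_1-1}})+\sum_{i=2}^{t}\reg(S/J_{F_{m_i-2}})$. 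Hence $\reg(S/Q_2)=\reg(S/J_{G'})+\reg(S/J_{H\setminus\{v,f\}})$ is precisely the right-hand side of the theorem. When $m_t=3$ the block $F_{m_t-1}=F_2$ is a path on four vertices, the operation $\circ$ collapses to $*$ (Remark~\ref{4.2}), and $G'=(F_{m_1}\circ\cdots\circ F_{m_{t-1}})*F_1$; one then re-evaluates $\reg(S/J_{G'})$ via \cite[Theorem~3.1]{JNR} and a secondary induction on the shorter chain, checking that the outcome is consistent with $\reg(S/J_{F_{m_t-2}})=\reg(S/J_{F_1})=1$. To keep this bookkeeping clean I would record, as a preliminary lemma, the pure-chain formula $\reg(S/J_{F_{n_1}\circ\cdots\circ F_{n_s}})=\reg(S/J_{F_{n_1-1}})+\sum_{i=2}^{s-1}\reg(S/J_{F_{n_i-2}})+\reg(S/J_{F_{n_s-1}})$ for $s\ge 2$, all $n_i\ge 3$, since this is exactly the input the inductive step needs.

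For the remaining two terms, the key observation is that $G_v$ is again of the type handled in Section~3: it is built from the complete graph on $N_G[v]$ by attaching the fan coming from $F_{m_t}$ — with the truncated chain $F_{m_1}\circ\cdots\circ F_{m_{t-1}}$ dangling from one of its vertices — and the fan coming from $H$. Thus $G_v$, and its induced subgraph $G_v\setminus v$, are bounded by Theorem~\ref{3.3} (or Theorem~\ref{3.4} when $H$ is a pure fan) together with the induction hypothesis applied to the dangling chain, and comparing with the formula for $\reg(S/Q_2)$ gives $\reg(S/Q_1),\reg(S/(Q_1+Q_2))<\reg(S/Q_2)$. Feeding this into~(\ref{2.6}) and using Lemma~\ref{2.4}(2) yields $\reg(S/J_G)=\reg(S/Q_2)$, as desired; the base case $t=2$ runs identically, with Propositions~\ref{4.1}, \ref{4.3} (and Proposition~\ref{4.4}/Remark~\ref{4.5} when $H$ is a fan) supplying the regularities of the pieces. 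The hardest part, as indicated above, is the interplay of the degenerate factors $F_{m_i}=F_3$ (equivalently $F_{m_i-2}=F_1=K_2$), where $\circ$ degenerates to $*$ and $G\setminus v$ ceases to be literally a member of the same family; the induction must be organized — via the preliminary pure-chain lemma, or by splitting at a suitable interior vertex of an $F_3$-block — so as to absorb these degeneracies.
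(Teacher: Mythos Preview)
Your overall architecture matches the paper's: induction on $t$, Ohtani decomposition at the gluing vertex $v$, identification of $G\setminus v$ as $(F_{m_1}\circ\cdots\circ F_{m_{t-1}}\circ F_{m_t-1})\sqcup (H\setminus\{v,f\})$, and the conclusion via Lemma~\ref{2.4}(2). The handling of $Q_2$ and the remarks on the degenerate factors $F_{m_i}=F_3$ are on target.

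The gap is in your treatment of $Q_1$ and $Q_1+Q_2$. You describe $G_v$ as a fan on $N_G[v]$ ``with the truncated chain $F_{m_1}\circ\cdots\circ F_{m_{t-1}}$ dangling from one of its vertices'' and then propose to bound its regularity ``by Theorem~\ref{3.3} (or Theorem~\ref{3.4}) together with the induction hypothesis applied to the dangling chain''. This does not work as stated: the chain is attached to the fan via the operation $\circ$, not via $*$, so you cannot simply add the regularity of the fan (from Theorem~\ref{3.3}/\ref{3.4}) to the regularity of the chain. Regularity is additive over $*$ by \cite[Theorem~3.1]{JNR}, but there is no such result for $\circ$; indeed, producing one is the whole point of this theorem.

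What makes the induction close is a sharper structural observation: $G_v$ is \emph{itself} a graph of exactly the form in the theorem, namely $F_{m_1}\circ\cdots\circ F_{m_{t-1}}\circ G_1$, where $G_1$ is a $k$-pure fan (or a $2$-pure fan when $H=F_n$) of the complete graph on $N_G[v]$, on the set $N_{F_{m_t}\setminus f_{t,2}}(v)\sqcup(W\setminus W_1)$ (respectively $N_{F_{m_t}\setminus f_{t,2}}(v)\sqcup N_{F_n\setminus f}(v)$). Thus the induction hypothesis applies \emph{directly} to $G_v$ and gives the exact value of $\reg(S/Q_1)$; the same holds for $G_v\setminus v$. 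For the base $t=2$ this is precisely Proposition~\ref{4.4}. Once you have these exact values, the comparison with $\reg(S/Q_2)$ is immediate and no separate ``pure-chain lemma'' is needed, since that formula is the special case $H=F_{m_t}$ of the theorem and is absorbed into the same induction.
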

\begin{proof}
For each $i \in \{1,\ldots,t\}$ and $j=\{1,2\}$,
let $f_{i,j}$ be the only pendant vertices of $F_{m_i}$ and for each 
$i \in\{1,\ldots,t-1\}$, $V(F_{m_i}) \cap V(F_{m_{i+1}})=\{v_{i,i+1}\}$, i.e. $F_{m_i} \circ F_{m_{i+1}}$
is the graph obtained from $F_{m_i}$ and $F_{m_{i+1}}$ by removing the pendant vertices $f_{i,2},f_{i+1,1}$ 
and identifying the vertices $2m_i-1 =v_{i,i+1} =2$.
Following  Remark \ref{2.5}, set 
$J_G = Q_1 \cap Q_2$, $ Q_1 = J_{G_v}$, $Q_2 =(x_v,y_v)+J_{G\setminus v} $
and $Q_1+Q_2 =(x_v,y_v)+ J_{G_v \setminus v}$.

We  proceed by  induction  on $t\geq 2$. Let $t=2$.  Let
$H=F_k^W(K_n)$ and $H'$ be the complete graph on $N_G[v]$. Without
loss of generality, assume that $|W_1| \geq 2$ and $v \in W_1$. Note that
$G_v = F_{m_1} \circ G'$, where $G'=F_k^{W'}(H')$ is the $k$-pure fan
graph of $H'$ on $W'=N_{F_{m_2} \setminus f_{2,2}}(v) \sqcup (W 
\setminus W_1)$. Since $m_2 \geq 3$, it follows from Proposition \ref{4.4} that $\reg(S/Q_1) =\reg(S/J_{G_v})
=k+4$.  Note that $G\setminus v = F_{m_1} \circ F_{m_2 -1} \sqcup
H\setminus \{v,f\}$. By Proposition \ref{4.3} and Remark \ref{4.2},
$$\reg(S/Q_2) =\reg(S/J_{G\setminus v}) =
\reg(S/J_{F_{m_1-1}})+\reg(S/J_{F_{m_2-2}})+ k+1.$$ Since $G_v
\setminus v$ is an induced subgraph of $G_v$, $\reg(S/J_{F_{m_1-1}})
= 3$ and $\reg(S/J_{F_{m_2-2}}) \geq 1$, it follows from the short
exact sequence (\ref{2.6}) and Lemma \ref{2.4} that
$$\reg(S/J_G)=\reg(S/J_{F_{m_1-1}})+\reg(S/J_{F_{m_2-2}})+ k+1.$$

Assume now that $H=F_n$. Without loss of generality, assume that $v =
2n-1$. Let $H''$ be the complete graph on $N_G[v]$
and $G''=F_2^{W''}(H'')$ is $2$-pure fan  of
$H''$ on $W''=N_{F_{m_2} \setminus f_{2,2}}(v) \sqcup N_{F_{n} \setminus f}(v)$.
Then $G_v = F_{m_1} \circ G''$.
By Proposition \ref{4.4}, $\reg(S/Q_1) =\reg(S/J_{G_v}) =6$.
Since $G\setminus v = F_{m_1} \circ F_{m_2 -1} \sqcup F_{n-1}$, by
Proposition \ref{4.3} and Remark \ref{4.2},  
$$\reg(S/Q_2) =\reg(S/J_{G\setminus v}) =
\reg(S/J_{F_{m_1-1}})+\reg(S/J_{F_{m_2-2}})+ \reg(S/J_{F_{n-1}}) \geq
7.$$
 Note that $G_v\setminus v$ is an induced subgraph of $G_v$. Thus, by \cite[Corollary 2.2]{MM}, $\reg(S/Q_1+Q_2) = \reg(S/J_{G_v \setminus v}) \leq \reg(S/J_{G_v}) = 6$.
Hence, it follows from the short exact sequence (\ref{2.6}) and 
Lemma \ref{2.4} that  
$$\reg(S/J_G)=\reg(S/J_{F_{m_1-1}})+\reg(S/J_{F_{m_2-2}})+ \reg(S/J_{F_{n-1}}).$$
Now, assume that $t\geq3$ and the result  is true for $\leq t-1$. 
Let $H=F_k^W(K_n)$ and $H_1$ be the complete graph on $N_G[v]$.
Note that $G_v = F_{m_1} \circ \cdots \circ F_{m_{t-1}} \circ G_1$, 
where $G_1=F_k^{U}(H_1)$ is the $k$-pure fan  of 
$H_1$ on 
$U=N_{F_{m_t} \setminus f_{t,2}}(v) \sqcup (W \setminus 
W_1)$, $G\setminus v = F_{m_1} \circ \cdots \circ F_{m_{t-1}} \circ
F_{m_t-1} \sqcup H\setminus \{v,f\}$ and $G_v \setminus v = F_{m_1} \circ \cdots \circ F_{m_{t-1}}
\circ F_k^U(H_1\setminus v)$. Hence by induction on $t$,
\begin{eqnarray*}
  \reg(S/J_{G_v}) & =& \reg(S/F_{m_1 -1})+ \reg(S/F_{m_2
  -2})+\cdots + \reg(S/F_{m_{t-1} -2})+k+1; \\
  \reg(S/J_{G\setminus v}) & =&  
  \reg(S/J_{F_{m_1-1}})+ \reg(S/J_{F_{m_2 -2}}) +\cdots +
  \reg(S/J_{F_{m_{t-1}-2}})\\
  & & +\reg(S/J_{F_{m_t-2}})+ k+1; \\
  \reg(S/J_{G_v \setminus v}) & =&  \reg(S/F_{m_1 -1})+ \reg(S/F_{m_2 -2})+\cdots + \reg(S/F_{m_{t-1} -2})+k+1.
\end{eqnarray*}
By (\ref{2.6}) and  Lemma \ref{2.4}, we get 
  $$\reg(S/J_G)=\reg(S/F_{m_1 -1})+ \reg(S/F_{m_2 -2})+\cdots + \reg(S/F_{m_{t} -2})+k+1.$$
  
Now assume that $H=F_n$. Let $H_2$ be the complete graph on vertex set $N_G[v]$.
Note that $G_v = F_{m_1} \circ \cdots \circ F_{m_{t-1}} \circ G_2$, 
where $G_2=F_2^{U'}(H_2)$ is the $2$-pure  fan  of 
$H_2$ on $U'=N_{F_{m_t} \setminus f_{t,2}}(v) \sqcup N_{F_{n}
\setminus f}(v) $, $G\setminus v = F_{m_1} \circ \cdots \circ
F_{m_{t-1}} \circ F_{m_t -1} \sqcup F_{n-1}$ and $G_v \setminus v =
F_{m_1} \circ \cdots \circ F_{m_{t-1}} \circ F_2^{U'}(H_2\setminus
v)$.  Hence by induction on $t$, 
\begin{eqnarray*}
\reg(S/J_{G_v}) & = & \reg(S/F_{m_1 -1})+ \reg(S/F_{m_2 -2})+\cdots +
\reg(S/F_{m_{t-1} -2})+3;\\
\reg(S/J_{G\setminus v}) & = & 
 \reg(S/J_{F_{m_1-1}})+\reg(S/J_{F_{m_2-2}})+\cdots+ \reg(S/J_{F_{m_t
 -2}}) + \reg(S/J_{F_{n-1}}); \\
\reg(S/J_{G_v \setminus v}) & = & \reg(S/F_{m_1 -1})+\reg(S/F_{m_2 -2})+\cdots + \reg(S/F_{m_{t-1} -2})+3.
\end{eqnarray*}
Using the short exact sequence (\ref{2.6}) and Lemma \ref{2.4}, we
conclude that
  $$\reg(S/J_G)=  \reg(S/J_{F_{m_1-1}})+\reg(S/J_{F_{m_2-2}})+\cdots+ \reg(S/J_{F_{m_t -2}}) + \reg(S/J_{F_{n-1}}).$$ Hence, the assertion follows.
  \end{proof}
Now, we obtain a precise expression for regularity of binomial edge ideal of Cohen-Macaulay bipartite graphs.
By \cite[Theorem 6.1]{dav}, if $G$ is a connected Cohen-Macaulay
bipartite graph, then there exists a positive integer $s$ such that
$G=G_1*\cdots*G_s$, where $G_i=F_{n_i}$ or $G_i=F_{m_{i,1}} \circ \cdots \circ F_{m_{i,t_i}}$, 
for some $n_i\geq1$ and $m_{i,j}\geq 3$ for each $j=1,\ldots,t_i$. Let
$A=\{i\in[s]: G_i =F_{n_i} , n_i \geq2\}$, $B=\{i\in[s]: G_i =F_{n_i}
, n_i =1\}$ and 
$C=\{i\in [s]: G_i=F_{m_{i,1}} \circ \cdots \circ F_{m_{i,t_i}},
t_i\geq 2\}$. For each $i\in C$, let $C_i=\{j \in \{2, \ldots, t_i-1\}
~ : ~m_{i,j} \geq 4  \}\sqcup \{1,t_i\}$ and 
$C_i'=\{j \in \{2, \ldots, t_i-1\} ~ : ~
m_{i,j} = 3 \}$. Set $\alpha = |A| + \sum_{i \in C} |C_i|$ and $\beta
= |B| + \sum_{i \in C}|C_i'|$.
\begin{theorem}\label{cm-bipartite}
Let $G=G_1*\cdots*G_s$ be Cohen-Macaulay connected bipartite graph.
Let $\alpha$ and $\beta$ be as defined above. Then
  $\reg(S/J_G) =3\alpha+\beta$.
 \end{theorem}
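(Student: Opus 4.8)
The plan is to reduce the general statement to the building blocks already analysed and then simply add up contributions using the additivity of regularity under the operation $*$. By \cite[Theorem 3.1]{JNR}, since $G = G_1 * \cdots * G_s$, we have
\[
\reg(S/J_G) = \sum_{i=1}^{s} \reg(S/J_{G_i}),
\]
so it suffices to compute $\reg(S/J_{G_i})$ for each of the three types of factors. For $i \in A$, $G_i = F_{n_i}$ with $n_i \geq 2$, and Proposition \ref{4.1} gives $\reg(S/J_{G_i}) = 3$. For $i \in B$, $G_i = F_1 = K_2$, so $\reg(S/J_{G_i}) = 1$. The substantial case is $i \in C$, where $G_i = F_{m_{i,1}} \circ \cdots \circ F_{m_{i,t_i}}$ with $t_i \geq 2$; here I would invoke Theorem \ref{4.6} (with $H = F_{m_{i,t_i}}$, after noting $F_{m_{i,1}} \circ \cdots \circ F_{m_{i,t_i-1}} \circ F_{m_{i,t_i}}$ fits the form $F_{m_1} \circ \cdots \circ F_{m_{t}} \circ (H,f)$ with the last $F$ playing the role of $H = F_n$) to obtain
\[
\reg(S/J_{G_i}) = \reg(S/J_{F_{m_{i,1}-1}}) + \sum_{j=2}^{t_i} \reg(S/J_{F_{m_{i,j}-2}}).
\]

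Next I would evaluate each term on the right using Proposition \ref{4.1} and the trivial cases $\reg(S/J_{F_1}) = 1$, $\reg(S/J_{F_2}) = 3$. For the first summand: $m_{i,1} \geq 3$, so $m_{i,1}-1 \geq 2$ and $\reg(S/J_{F_{m_{i,1}-1}}) = 3$; this accounts for the index $1 \in C_i$. For $2 \leq j \leq t_i - 1$: if $m_{i,j} \geq 4$ then $m_{i,j} - 2 \geq 2$ and $\reg(S/J_{F_{m_{i,j}-2}}) = 3$, contributing to $|C_i|$; if $m_{i,j} = 3$ then $m_{i,j} - 2 = 1$ and $\reg(S/J_{F_{m_{i,j}-2}}) = 1$, contributing to $|C_i'|$. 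For $j = t_i$: since $m_{i,t_i} \geq 3$ we get $m_{i,t_i} - 2 \geq 1$; but the definition puts $t_i$ unconditionally into $C_i$, so I need to check that $\reg(S/J_{F_{m_{i,t_i}-2}}) = 3$ — wait, if $m_{i,t_i} = 3$ this term is only $1$. I would resolve this by revisiting the exact output of Theorem \ref{4.6}: the last factor enters as $\reg(S/J_{H \setminus \{v,f\}})$ where $H = F_{m_{i,t_i}}$, and $H \setminus \{v,f\} = F_{m_{i,t_i}-1}$ (not $F_{m_{i,t_i}-2}$), so this term equals $\reg(S/J_{F_{m_{i,t_i}-1}}) = 3$ since $m_{i,t_i} - 1 \geq 2$. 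Thus the indices $1$ and $t_i$ each contribute $3$ via $F_{m-1}$-type terms, the remaining $j \in \{2,\dots,t_i-1\}$ with $m_{i,j} \geq 4$ contribute $3$, and those with $m_{i,j} = 3$ contribute $1$; hence $\reg(S/J_{G_i}) = 3|C_i| + |C_i'|$.

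Finally I would assemble: summing over $i \in [s]$,
\[
\reg(S/J_G) = \sum_{i \in A} 3 + \sum_{i \in B} 1 + \sum_{i \in C}\bigl(3|C_i| + |C_i'|\bigr) = 3\Bigl(|A| + \sum_{i \in C}|C_i|\Bigr) + \Bigl(|B| + \sum_{i \in C}|C_i'|\Bigr) = 3\alpha + \beta,
\]
as claimed. The main obstacle is purely bookkeeping: one must match the combinatorial sets $C_i$ and $C_i'$ exactly against the terms produced by Theorem \ref{4.6}, being careful that the first and last factors in the chain $F_{m_{i,1}} \circ \cdots \circ F_{m_{i,t_i}}$ contribute $\reg(S/J_{F_{m-1}})$ rather than $\reg(S/J_{F_{m-2}})$, which is why index $t_i$ (as well as $1$) belongs to $C_i$ regardless of the value of $m_{i,j}$, whereas the interior indices split according to whether $m_{i,j} \geq 4$ or $m_{i,j} = 3$. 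A secondary point to address carefully is the degenerate possibility $t_i = 2$, where there are no interior indices and $C_i = \{1,2\}$, $C_i' = \emptyset$; here Proposition \ref{4.3} directly gives $\reg(S/J_{G_i}) = 6 = 3|C_i| + |C_i'|$, consistent with the general formula.
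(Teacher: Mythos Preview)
Your proposal is correct and follows exactly the same approach as the paper: use \cite[Theorem 3.1]{JNR} to reduce to the individual $G_i$, handle $i\in A$ via Proposition~\ref{4.1}, $i\in B$ trivially, and $i\in C$ via Theorem~\ref{4.6} to obtain $\reg(S/J_{G_i})=3|C_i|+|C_i'|$. You have in fact supplied more detail than the paper does---your careful reading of Theorem~\ref{4.6} (noting that the last factor contributes $\reg(S/J_{F_{m_{i,t_i}-1}})$ via $H\setminus\{v,f\}=F_{m_{i,t_i}-1}$, not $F_{m_{i,t_i}-2}$, which is exactly why $t_i\in C_i$ unconditionally) and your separate treatment of $t_i=2$ via Proposition~\ref{4.3} are precisely the bookkeeping the paper leaves implicit.
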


\begin{proof}
By \cite[Theorem 3.1]{JNR}, $$\reg(S/J_G) =  \sum_{i=1}^s
\reg(S/J_{G_i}). $$ By Proposition \ref{4.1}, $\reg(S/J_{G_i}) = 3$ for $i
\in A$. If $i
\in B$, then $\reg(S/J_{G_i}) = 1$. If $i \in C$, then it follows from
Theorem \ref{4.6} that $\reg(S/J_{G_i}) = 3|C_i| + |C_i'|$. Hence the
assertion follows.
\end{proof}

We illustrate our result in the following example. 
Let $G= F_3 \circ F_4 \circ F_3 \circ F_3 \circ F_3 $ be the graph
as shown in figure below

\begin{figure}[H]
\begin{tikzpicture}[scale=.6]
\draw (-4,0)-- (-4,2);
\draw (-4,0)-- (-3,2);
\draw (-4,0)-- (-2,2);
\draw (-3,2)-- (-3,0);
\draw (-3,0)-- (-2,2);
\draw (-2,2)-- (-1,0);
\draw (-2,2)-- (0,0);
\draw (-1,0)-- (-1,2);
\draw (0,0)-- (0,2);
\draw (-1,2)-- (0,0);
\draw (-1,2)-- (1,0);
\draw (0,2)-- (1,0);
\draw (-2,2)-- (1,0);
\draw (1,0)-- (2,2);
\draw (1,0)-- (3,2);
\draw (2,2)-- (2,0);
\draw (2,0)-- (3,2);
\draw (3,2)-- (4,0);
\draw (3,2)-- (5,0);
\draw (4,2)-- (4,0);
\draw (4,2)-- (5,0);
\draw (5,0)-- (6,2);
\draw (5,0)-- (7,2);
\draw (6,2)-- (6,0);
\draw (7,2)-- (7,0);
\draw (7,2)-- (6,0);
\begin{scriptsize}
\fill  (-3,2) circle (1.5pt);
\fill  (-1,2) circle (1.5pt);
\fill  (-3,0) circle (1.5pt);
\fill  (-1,0) circle (1.5pt);
\fill  (-2,2) circle (1.5pt);
\fill  (0,0) circle (1.5pt);
\fill  (0,2) circle (1.5pt);
\fill  (1,0) circle (1.5pt);
\fill  (2,2) circle (1.5pt);
\fill  (2,0) circle (1.5pt);
\fill  (3,2) circle (1.5pt);
\fill  (4,2) circle (1.5pt);
\fill  (4,0) circle (1.5pt);
\fill  (5,0) circle (1.5pt);
\fill  (6,2) circle (1.5pt);
\fill  (6,0) circle (1.5pt);
\fill  (7,2) circle (1.5pt);
\fill  (7,0) circle (1.5pt);
\fill  (-4,2) circle (1.5pt);
\fill  (-4,0) circle (1.5pt);
\end{scriptsize}
\end{tikzpicture}
\end{figure}

Note that $G$ is Cohen-Macaulay bipartite graph. With respect to the
notation in Theorem \ref{cm-bipartite}, $A = \emptyset = B$ and $C =
\{1\}$. Also, we have $|C_1| = 3$ and $|C_1'| = 2$. Therefore,
by Theorem \ref{cm-bipartite}
$\reg(S/J_G) =11$.

 \vskip 2mm
\noindent
\textbf{Acknowledgements:} The second author thanks the National Board
for Higher Mathematics, India for the financial support. We have extensively
used SAGE \cite{sage} and Macaulay 2 \cite{M2} for computational
purposes.
 
\bibliographystyle{plain}  
\bibliography{bipartite}

\end{document}